\documentclass[12pt]{amsart}

\setlength{\textwidth}{450pt}
\setlength{\oddsidemargin}{0pt}
\setlength{\evensidemargin}{0pt}

\usepackage{verbatim, amssymb, enumerate}
\usepackage{hyperref}

\renewcommand \a{\alpha}
\renewcommand \b{\beta}

\newcommand \la{\lambda}

\newcommand \br{\mathbb{R}}
\newcommand \bc{\mathbb{C}}

\newcommand \rk{\operatorname{rk}}

\newcommand \Der{\operatorname{Der}}
\newcommand \End{\operatorname{End}}
\newcommand \Ric{\operatorname{Ric}}

\renewcommand \Re{\operatorname{Re}}

\newcommand \Span{\operatorname{Span}}
\newcommand \Tr{\operatorname{Tr}}

\newcommand \ig{\mathfrak{i}}
\newcommand\ag{\mathfrak a}
\newcommand\g{\mathfrak g}

\newcommand\z{\mathfrak z}

\newcommand \n{\mathfrak{n}}

\newcommand \ad{\operatorname{ad}}

\newcommand \diag{\operatorname{diag}}
\newcommand \Conv{\operatorname{Conv}}
\newcommand \grad{\operatorname{grad}}

\DeclareMathOperator{\ric}{Ric}

\newcommand \<{\langle}
\renewcommand \>{\rangle}
\newcommand \ip{\<\cdot,\cdot\>}

\makeatletter
\newtheorem*{rep@theorem}{\rep@title}
\newcommand{\newreptheorem}[2]{%
\newenvironment{rep#1}[1]{%
\def\rep@title{#2 \ref{##1}}%
\begin{rep@theorem}}%
{\end{rep@theorem}}}
\makeatother

\theoremstyle{plane}
\newtheorem{theorem}{Theorem}
\newtheorem*{theorem*}{Theorem}
\newtheorem*{corollary*}{Corollary}
\newtheorem*{conj*}{Conjecture}
\newtheorem{lemma}{Lemma}
\newtheorem{proposition}{Proposition}
\newtheorem*{prop*}{Proposition}
\newreptheorem{theorem}{Theorem}

\theoremstyle{definition}

\newtheorem*{definition*}{Definition}

\theoremstyle{remark}
\newtheorem{remark}{Remark}

\begin{document}

\title{Solvable extensions of negative Ricci curvature of filiform Lie groups} 

\author{Y.~Nikolayevsky}

\address{Department of Mathematics and Statistics, La Trobe University, Melbourne, Australia 3086}
\email{Y.Nikolayevsky@latrobe.edu.au}

\subjclass[2010]{53C30, 22E25}
\keywords{Solvable Lie algebra, filiform Lie algebra, nilradical, negative Ricci curvature}


\thanks{The author was partially supported by ARC Discovery Grant DP130103485. }

\begin{abstract}
We give necessary and sufficient conditions of the existence of a left-invariant metric of strictly negative Ricci curvature on a solvable Lie group the nilradical of whose Lie algebra $\g$ is a filiform Lie algebra $\n$. It turns out that such a metric always exists, except for in the two cases, when $\n$ is one of the algebras of rank two, $L_n$ or $Q_n$, and $\g$ is a one-dimensional extension of $\n$, in which cases the conditions are given in terms of certain linear inequalities for the eigenvalues of the extension derivation.
\end{abstract}

\maketitle

\section{Introduction}
\label{s:intro}

The question of whether (and when) a given manifold admits a Riemannian metric with a particular sign of the curvature is one of the fundamental in Riemannian geometry. Similarly, for homogeneous manifolds, the same question can be asked for left-invariant metrics. In that case the curvature is entirely expressed in terms of the algebraic structure of the given homogeneous space and one expects the answer to be stated in both topological and algebraic terms.

In this paper we continue the study of metric solvable Lie groups admitting a left-invariant metric of negative Ricci curvature, which has been started in \cite{NN}, and we refer the reader to the Introduction of that paper for a detailed overview of known results. At present, necessary and sufficient conditions for a homogeneous space to admit a left-invariant metric with a particular sign of the \emph{sectional} curvature are well understood, as well as the conditions for a homogeneous space to admit a left-invariant metric with positive or with zero \emph{Ricci} curvature. By the result of Milnor \cite{Mil} (for Lie groups) and Berestovskii \cite{Ber} (in the general case), a homogeneous space admits a left-invariant metric with $\Ric > 0$ if and only if it is compact and has a finite fundamental group (compare to the Myers Theorem). By the result of \cite{AK}, any Ricci-flat homogeneous space is flat. Note that flat homogeneous spaces were completely described in~\cite{Ale,BB}: every such space is isometric to a solvmanifold, the nilradical $\n$ of whose Lie algebra $\g$ is abelian, its orthogonal complement $\ag=\n^\perp$ is also abelian and the operators $\ad_Y, \; Y \in \ag$, are skew-symmetric.

Much less is known, however, about Riemannian homogeneous spaces of negative Ricci curvature. No unimodular solvable Lie group (in particular, no nilpotent group) admits a left-invariant metric with $\Ric < 0$, by the following result.
\begin{theorem}[\cite{DLM}]\label{t:dm}
A unimodular Lie group which admits a left-invariant metric with $\Ric < 0$ is noncompact and semisimple.
\end{theorem}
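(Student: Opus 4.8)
The plan is to establish the two conclusions—noncompactness and semisimplicity—by separate arguments, only the second of which uses unimodularity.

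\emph{Noncompactness.} This needs no unimodularity and follows from Bochner's theorem. I would first recall that for a left-invariant metric the left translations $L_g$ are isometries, so every right-invariant vector field is a Killing field; as the generators range over a basis of $\g$ these fields span the tangent space at each point. If $G$ were compact, Bochner's theorem applied to a metric with $\Ric<0$ would force every Killing field to vanish, which is impossible since $\dim G>0$. Hence $G$ is noncompact.

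\emph{Semisimplicity.} Here the strategy is to show that the radical $\mathfrak r$ of $\g$ is trivial. Suppose it is not. Then the last nonzero term of the derived series of $\mathfrak r$ is a nonzero \emph{abelian} ideal $\ag$ of $\g$: it is characteristic in $\mathfrak r$, hence invariant under every $\ad_X|_{\mathfrak r}$ with $X\in\g$, so it is an ideal of $\g$. I would then derive a contradiction by computing the trace of the Ricci form over $\ag$ and showing it is nonnegative, which is incompatible with $\ric<0$.

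The computation rests on Milnor's formula for the Ricci quadratic form of a left-invariant metric, in an orthonormal basis $\{e_i\}$,
$$\ric(X,X)=-\tfrac12\sum_i|[X,e_i]|^2-\tfrac12 B(X,X)+\tfrac14\sum_{i,j}\langle[e_i,e_j],X\rangle^2-\langle[H,X],X\rangle,$$
where $B$ is the Killing form and $H$ is the mean curvature vector, $\langle H,X\rangle=\operatorname{tr}\ad_X$. The first key point is that \emph{unimodularity} kills the last term, since $H=0$. The second is that for $a\in\ag$ one has $(\ad_a)^2=0$: indeed $\ad_a$ maps $\g$ into $\ag$ because $\ag$ is an ideal, and annihilates $\ag$ because $\ag$ is abelian; hence $B(a,a)=\operatorname{tr}((\ad_a)^2)=0$. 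I would take an orthonormal basis $\{a_p\}$ of $\ag$, extend it to $\g$, and sum the formula over $X=a_p$. Using that $\ag$ is an abelian ideal to discard the vanishing structure constants $\langle[e_i,e_j],e_k\rangle$, the term $-\tfrac12\sum_{p,i}|[a_p,e_i]|^2$ cancels exactly against the corresponding part of $\tfrac14\sum_{p,i,j}\langle[e_i,e_j],a_p\rangle^2$, leaving only the manifestly nonnegative remainder $\tfrac14\sum_{p,\alpha,\beta}\langle[e_\alpha,e_\beta],a_p\rangle^2$, where $e_\alpha,e_\beta$ run over the orthogonal complement of $\ag$. Thus $\operatorname{tr}(\ric|_{\ag})\ge0$, contradicting the fact that $\ric<0$ forces each $\ric(a_p,a_p)<0$. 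Therefore $\mathfrak r=0$ and $\g$ is semisimple.

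The main obstacle is the bookkeeping in this last step: one must check carefully which structure constants survive when one or more indices lie in $\ag$, and verify that the surviving part of the $|[a_p,e_i]|^2$ sum matches the surviving mixed part of the $\langle[e_i,e_j],a_p\rangle^2$ sum up to the factor that produces the cancellation. It is precisely here that both hypotheses are indispensable—unimodularity to remove the mean-curvature term and the abelianness of $\ag$ both to annihilate the Killing-form term and to restrict the nonzero structure constants.
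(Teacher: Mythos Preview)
Your argument is correct, and the bookkeeping in the cancellation step checks out: after splitting the orthonormal basis as $\{a_p\}\cup\{e_\alpha\}$ and using that $[a_p,a_q]=0$ and $[a_p,e_\alpha]\in\ag$, the mixed term $\tfrac12\sum_{p,q,\alpha}\langle[a_q,e_\alpha],a_p\rangle^2$ coming from $\tfrac14\sum\langle[e_i,e_j],a_p\rangle^2$ exactly cancels $-\tfrac12\sum_{p,\alpha}|[a_p,e_\alpha]|^2$, and what remains is the nonnegative $\tfrac14\sum_{p,\alpha,\beta}\langle[e_\alpha,e_\beta],a_p\rangle^2$. The noncompactness via Bochner is also fine.

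Note, however, that the paper does \emph{not} give its own proof of this statement: Theorem~\ref{t:dm} is quoted from \cite{DLM} and used as a black box (only to rule out the unimodular case in the proof of Proposition~\ref{p:Qn}). So there is nothing in the paper to compare your approach against. For what it is worth, the argument you wrote is essentially the classical one underlying the cited result; your proof could stand on its own as a self-contained justification that the paper chose to omit.
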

Examples of left-invariant metrics with $\Ric < 0$ were constructed on $\mathrm{SL}(n,\mathbb{R}), \; n \ge 3$, in \cite{LDM} and on some complex simple Lie groups in \cite{DLM}.

The general, non-unimodular, case however, seems to be wide open, even for Lie groups (leave alone homogeneous spaces). It is well known that the Ricci curvature of a left-invariant metric on a Lie group $G$ can be entirely computed from the algebraic data: the structure of the Lie algebra $\g$ of $G$ and the inner product $\ip$ on $\g$ (see Section~\ref{ss:ricci} for details). From this point on we descend to the level of Lie algebras and, with a slight abuse of terminology, will speak of the Ricci curvature of the metric Lie algebra $(\g, \ip)$.

In all the known cases, the necessary and sufficient condition for a given solvable Lie algebra $\g$ to admit an inner product with $\Ric < 0$ has the following form: $\g$ is an extension of its nilradical $\n$ by some derivations, one of which satisfies certain linear inequalities imposed on the real parts of its eigenvalues. The precise nature of such inequalities depends on the structure of $\n$ and in the general case remains unknown. In \cite{NN} the authors speculated that they may be related to the fact that (the real semisimple part of) the derivation belongs to a certain convex cone in the torus of derivations of $\n$.

For example, a solvable Lie algebra $\g$ with an abelian nilradical $\n$ admits an inner product of negative Ricci curvature if and only if there exists $Y \in \g$ such that all the eigenvalues of the restriction of $\ad_Y$ to $\n$ have positive real part. This is a consequence of the following Theorem.

\begin{theorem}[\cite{NN}]\label{t:neg}
Suppose $\g$ is a solvable Lie algebra. Let $\n$ be the nilradical of $\g$ and $\z$ be the centre of $\n$. Then
\begin{enumerate}[{\rm (1)}]
  \item \label{it:neg1}
  If $\g$ admits an inner product of negative Ricci curvature, then there exists $Y \in \g$ such that $\Tr \ad_Y > 0$ and all the eigenvalues of the restriction of the operator $\ad_Y$ to $\z$ have positive real part;

  \item \label{it:neg2}
  If there exists $Y \in \g$ such that all the eigenvalues of the restriction of $\ad_Y$ to $\n$ have positive real part, then $\g$ admits an inner product of negative Ricci curvature.
\end{enumerate}
\end{theorem}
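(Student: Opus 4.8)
Throughout, write $H\in\g$ for the mean curvature vector, defined by $\<H,X\>=\Tr\ad_X$, and recall the standard expression for the Ricci curvature of a metric Lie algebra: in an orthonormal basis $\{e_i\}$,
$$\ric(X,X)=-\tfrac12\sum_i|[X,e_i]|^2-\tfrac12\Tr(\ad_X^2)+\tfrac14\sum_{i,j}\<[e_i,e_j],X\>^2-\<[H,X],X\>.$$
I will use two structural facts about the centre $\z$ of $\n$: since $\z$ is a characteristic ideal of $\n$ it is an ideal of $\g$, so $\ad_H$ preserves $\z$; and for $Z\in\z$ one has $\ad_Z|_\n=0$ and $\ad_Z^2=0$, whence $\Tr(\ad_Z^2)=0$. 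The two implications are essentially independent and I treat them in turn.

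For part (1) I would take $Y=H$. If $H=0$ then $\Tr\ad_X=0$ for all $X$, so $\g$ is unimodular; but a unimodular solvable Lie algebra admits no metric with $\Ric<0$ by Theorem~\ref{t:dm}, a contradiction. Hence $H\ne0$ and $\Tr\ad_H=|H|^2>0$, the first required inequality. For the spectral condition, set $F=\ad_H|_\z$ and, for $Z\in\z$, rewrite the identity above (using $\Tr(\ad_Z^2)=0$) as $\ric(Z,Z)=P(Z)-\<FZ,Z\>$, where $P(Z)=-\tfrac12\sum_i|[Z,e_i]|^2+\tfrac14\sum_{i,j}\<[e_i,e_j],Z\>^2$. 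It would be too naive to try to deduce that $F$ has positive-definite symmetric part: examples show $P(Z)$ can be strictly negative on individual vectors, and the eigenvalue condition is genuinely weaker than definiteness of $F_{\mathrm{sym}}$. Instead I would sum over an arbitrary real $F$-invariant subspace $W\subseteq\z$: summing the identity over an orthonormal basis of $W$ and using that $\sum_Z\<FZ,Z\>=\Tr(F|_W)$ when $W$ is $F$-invariant, the negativity $\Tr(\Ric|_W)<0$ yields $\Tr(F|_W)>\sum_{Z\in W}P(Z)$. If one establishes the nonnegativity $\sum_{Z\in W}P(Z)\ge0$ for every $F$-invariant $W$, then $\Tr(F|_W)>0$ for all such $W$; applying this to the minimal invariant subspaces (lines for real eigenvalues, planes for conjugate pairs) shows that every eigenvalue of $\ad_H|_\z$ has positive real part, as required.

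For part (2) the hypothesis is stronger, namely that $\ad_Y|_\n$ has positive-real-part spectrum on all of $\n$, and the task is to construct a metric. First, by Lyapunov's theorem I may fix an inner product on $\n$ for which $S:=(\ad_Y|_\n)_{\mathrm{sym}}$ is positive definite. I would then set $\ag=\n^\perp$ and deform the metric by a multi-parameter rescaling chosen so that the rank-one algebra $\br Y\ltimes\n$, with $\n$ replaced by its abelianisation $\n^{\mathrm{ab}}$, becomes the dominant part: damp $[\n,\n]$ by rescaling along the grading of $\n$ defined by $S$, shrink the directions of $\ag$ transverse to $Y$ so that the cocycle $[\ag,\ag]\subseteq\n$ and the remaining derivations contribute only lower-order terms, and keep $\<Y,Y\>$ of unit order. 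In the resulting limit the Ricci operator is that of $\br Y\ltimes\n^{\mathrm{ab}}$, whose negative definiteness follows from a direct computation of the displayed formula, the leading terms being $-\Tr(S^2)$ in the $Y$-direction and a term governed by $-(\Tr\ad_Y)\,S$ along $\n$, exactly in the spirit of Heintze's negatively curved solvable groups. Since $\Ric<0$ is an open condition and the Ricci tensor depends continuously on the deformation parameters, nonzero values of which yield metrics on algebras isomorphic to $\g$, strict negativity persists and produces the desired metric on $\g$.

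The real work concentrates in two places. In part (1) it is the inequality $\sum_{Z\in W}P(Z)\ge0$ for $F$-invariant $W$: because $P$ is indefinite pointwise, invariance of $W$ must be used essentially, and I expect the proof to rest on reorganising the mixed $\ag$--$\n$ contributions into a sum of squares after tracing over $W$, the purely nilpotent $[\n,\n]$-terms supplying the remaining nonnegative slack. In part (2) it is the simultaneous control of every curvature-raising contribution, coming from $[\n,\n]$, from $[\ag,\ag]$, from the non-$Y$ derivations, and from the mixed terms, under a single degeneration and uniformly in the rank of $\ag$; a naive uniform rescaling fails precisely because it cannot damp both $[\n,\n]$ and the cocycle $[\ag,\ag]$ at once, which is why the grading-adapted scaling is needed. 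I expect the estimate in part (2) to be the more laborious, while the conceptual heart is the nonnegativity lemma of part (1).
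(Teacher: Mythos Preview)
The paper does not contain its own proof of Theorem~\ref{t:neg}: the statement is quoted verbatim from \cite{NN} and used as a black box throughout. There is therefore nothing in this paper to compare your proposal against.

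As to the proposal itself, it is an outline rather than a proof, and you say so yourself. In part~(1) the choice $Y=H$ and the reduction to showing $\Tr(F|_W)>0$ for every $F$-invariant subspace $W\subseteq\z$ is the natural one, and the observations $\ad_Z^2=0$ and $[\z,\n]=0$ for $Z\in\z$ are correct and relevant. But the crux---the inequality $\sum_{Z\in W}P(Z)\ge0$ for $F$-invariant $W$---is only stated as an expectation, not established; without it the argument does not close. In part~(2) the Lyapunov step and the degeneration idea are reasonable, and indeed the cited paper \cite{NN} (and the present paper, via \cite[Proposition~1]{NN}) do argue by degenerating to a simpler limit algebra and invoking openness of $\Ric<0$. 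However, your description of the limit as $\br Y\ltimes\n^{\mathrm{ab}}$ and of the required multi-parameter rescaling remains schematic; controlling simultaneously the $[\n,\n]$, $[\ag,\ag]$, and mixed contributions is precisely where the work lies, and you have not carried it out. So the proposal identifies plausible strategies but leaves the substantive estimates in both parts as conjectures.
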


The necessary and sufficient conditions to admit an inner product with $\Ric < 0$ for Lie algebras whose nilradical is a Heisenberg Lie algebra or is a standard filiform algebra have a similar ``flavour" \cite{NN}.

In this paper we study solvable algebras whose nilradical belongs to another important class of the variety of nilpotent Lie algebras -- the class of filiform algebras. \emph{Filiform} Lie algebras are those nilpotent algebras which are ``the least" nilpotent --  they have the maximal possible number of nonzero terms in the lower central series for a given dimension, that is, $\n^{(n-2)} \ne 0$, where $n=\dim \n$ \cite{Ver}. From among these algebras, there are two distinguished ones, $L_n$ and $Q_n$, which have rank two; all the other filiform algebras have rank one or zero by \cite{GK} -- see Section~\ref{ss:fili} for details.

The algebra $L_n=\Span(X_1, \dots, X_n)$ is defined by the relations $[X_1, X_i]=X_{i+1}, \; i=2, \dots, n-1$, where from now on we adopt the convention that all the relations between the basis elements of a Lie algebra which we do not list are zero (unless they follow from the given ones by the skew-symmetry). The codimension one abelian ideal $\ig=\Span(X_2, \dots, X_n)$ and the one-dimensional centre $\br X_n$ of $L_n$ are both characteristic ideals of $L_n$ (they are invariant under the action of any derivation on $L_n$). If $\g$ is a solvable extension of $L_n$, define the one-forms $\iota_2$ and $\iota_n$ on $\g$ as follows: for $Y \in \g$, $[Y,X_n]=\iota_n(Y) X_n$ and $\iota_2(Y)=\Tr((\ad_Y)_{|\ig})$.

The algebra $Q_n=\Span(X_1, \dots, X_n), \; n=2m$, is defined by the relations $[X_1, X_i]=X_{i+1}$, $i=2, \dots, n-2$, and $[X_j,X_{n-j+1}]=(-1)^{j+1}X_n, \; j=2, \dots, n-1$. The members $\ig_k=Q_n^{(k-2)}=\Span(X_{k}, \dots, X_n), \; k=3, \dots, n$, of the lower central series are all characteristic ideals of $Q_n$, so for a solvable extension $\g$ is of $Q_n$, we can define the one-forms $\iota_k, \; k=3, \dots, n$, on $\g$ as follows: for $Y \in \g, \; \iota_k(Y) = \Tr((\ad_Y)_{|\ig_k})$.

Our main result is the following theorem.

\begin{theorem}\label{th:fili}
Let $\g$ be a solvable non-nilpotent Lie algebra with the filiform nilradical $\n$. Then $\g$ admits an inner product of negative Ricci curvature if and only if
\begin{enumerate}[{\rm (a)}]
  \item\label{it:t1}
  either $\n=L_n$ and $\g$ is a one-dimensional extension of $\n$ by a vector $Y$ such that $\iota_2(Y), \iota_n(Y) > 0$ \cite[Theorem~4]{NN};

  \item\label{it:t2}
  or $\n=Q_n, \; n=2m, \; m>2$, and $\g$ is a one-dimensional extension of $\n$ by a vector $Y$ such that $\iota_k(Y) > 0, \; k=m+1, \dots, n$;

  \item\label{it:t3}
  or, with no restrictions, in all the other cases: either $\n$ is any other filiform algebra, or otherwise $\n$ is $L_n$ or $Q_n$ and $\g$ is an extension of $\n$ of dimension greater than one.
\end{enumerate}
\end{theorem}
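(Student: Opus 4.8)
The plan is to organise the argument around the maximal torus of derivations of the nilradical $\n$ and to separate the cases in which this torus is fully accessible from $\g$ (part~\eqref{it:t3}) from those in which only a line in it is accessible (parts~\eqref{it:t1} and~\eqref{it:t2}). First I would record the basic dichotomy coming from Section~\ref{ss:fili}: a non-nilpotent solvable extension of $\n$ exists only if $\n$ has rank at least one, for if $\n$ were characteristically nilpotent then every $\ad_Y|_\n$ ($Y\in\g$) would be nilpotent and $\g$ itself would be nilpotent. Thus only ranks one and two occur, the rank-two algebras being exactly $L_n$ and $Q_n$. Writing $D=\ad_Y|_\n$ and letting $w_1,w_2$ be the eigenvalues of its semisimple part on $X_1,X_2$, these propagate along the chain $X_i=(\ad X_1)^{i-2}X_2$ as $w_2+(i-2)w_1$.

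For part~\eqref{it:t3} the strategy is to produce $Y$ for which $D$ has all eigenvalues of positive real part and then invoke Theorem~\ref{t:neg}\eqref{it:neg2}. When $\n=L_n$ or $Q_n$ and $\dim(\g/\n)>1$, the extension maps onto the full two-dimensional torus, so $(w_1,w_2)$ may be prescribed, e.g.\ $w_1,w_2>0$, making all eigenvalues positive. The rank-one case rests on the following lemma, which I would isolate: a rank-one filiform algebra admits a semisimple derivation with all eigenvalues positive. Every $X_l$ with $l\ge2$ is an iterated bracket of $X_1,X_2$ using at least one factor $X_2$, so its weight is $w_l=b_lw_2+a_lw_1$ with integers $a_l\ge0$, $b_l\ge1$; rank one means the weights obey a nontrivial relation $w_2=c\,w_1$, which originates from some structure relation $[X_i,X_j]=\sum_k c^kX_k$ with $i,j\ge2$, and the filiform bound $[X_i,X_j]\in\Span(X_{i+j-1},\dots,X_n)$ forces $c=k-i-j+2\ge1$ for each target. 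Hence $w_l=(b_lc+a_l)w_1\ge b_lw_1>0$ once $w_1>0$, and Theorem~\ref{t:neg}\eqref{it:neg2} applies.

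Parts~\eqref{it:t1} and~\eqref{it:t2} are the one-dimensional extensions, where $(w_1,w_2)$ is fixed by $\g$ and need not lie in the positive quadrant, so Theorem~\ref{t:neg}\eqref{it:neg2} no longer suffices and the Ricci form must be analysed directly. Part~\eqref{it:t1} is \cite[Theorem~4]{NN}; it also absorbs $Q_4$, since $Q_4\cong L_4$ (swap $X_1,X_2$ and negate $X_3$), which is why part~\eqref{it:t2} begins at $m>2$. For the necessity in~\eqref{it:t2} I would exploit that each $\ig_k=Q_n^{(k-2)}$ with $k\ge m+1$ is an \emph{abelian} characteristic ideal: a nonzero bracket $[X_a,X_b]$ with $a,b\ge m+1=\tfrac n2+1$ would require $a+b=n+1$, which is impossible. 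Tracing the Ricci operator (Section~\ref{ss:ricci}) over an orthonormal basis of such an abelian ideal, the intrinsic brackets inside $\ig_k$ drop out and the surviving contribution is controlled by $-\,\iota_k(Y)$ plus non-negative terms; this is the mechanism already behind Theorem~\ref{t:neg}\eqref{it:neg1} (the case $k=n$, the centre), and I would generalise that computation to each $\ig_k$ to force $\iota_k(Y)>0$ for $k=m+1,\dots,n$.

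The genuinely hard direction is the sufficiency in~\eqref{it:t2}: assuming only the finitely many inequalities $\iota_k(Y)>0$, $k=m+1,\dots,n$, one must \emph{construct} an inner product with $\Ric<0$. Here I would fix an adapted basis $X_1,\dots,X_n,Y$, replace the metric by a one-parameter family rescaling the graded pieces of $\n$ so as to damp the nilpotent structure constants, and write $\Ric$ in block form: a term from the symmetric part of $\ad_Y|_\n$, a non-negative term from the nilpotent brackets, and cross terms. Positivity of the $\iota_k$ controls exactly the traces of $\ad_Y$ over the abelian ideals $\ig_k$, which dominate $\Ric$ on $\Span(X_{m+1},\dots,X_n)$; the main obstacle is to show that these conditions \emph{simultaneously} force negativity on the lower, non-abelian block $\Span(X_1,\dots,X_m)$ as the scaling degenerates, i.e.\ that no inequality beyond $\iota_k>0$ is needed. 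I expect this estimate — keeping every eigenvalue of $\Ric$ uniformly negative while the off-diagonal bracket blocks are made small — to be the technical heart of the argument.
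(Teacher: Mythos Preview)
Your reduction to the maximal torus and the treatment of part~\eqref{it:t3} are essentially what the paper does; your rank-one argument via weights is a valid alternative to the paper's use of the explicit derivation $\phi$ from~\eqref{eq:ABder}, though you omit the real-versus-complex step (the paper checks that the real extension must be by a real multiple of $\phi$, using that all eigenvalues of $\phi$ have distinct absolute values). Your plan for the necessity in~\eqref{it:t2} is also correct: the paper computes $\Tr(\pi_k R_1)$ for the orthogonal projection $\pi_k$ onto $\ig_k$ and shows it is at least $-T\iota_k$, using precisely the vanishing of brackets inside $\ig_k$ that you identify.

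The genuine gap is in the sufficiency for~\eqref{it:t2}, and your proposed mechanism is not the one that works. A one-parameter rescaling is not enough; the paper first degenerates $\g$ (via \cite[Proposition~1]{NN}) so that $\ad_Y|_\n$ becomes \emph{diagonal}, and then searches over the full $n$-parameter family of diagonal inner products $e_i=e^{-x_i}X_i$. The diagonal entries of $\Ric$ then take the form $P(x)=\tfrac14\grad\phi(x)-T(aV_1+dV_2)$ with $\phi(x)=\sum_\alpha e^{2(x,F_\alpha)}$, and a convexity lemma (the range of $\grad\phi$ is exactly the open cone $\Conv(F_\alpha)$) reduces the problem to showing $aV_1+dV_2\in\Conv(E_1,\dots,E_n,F_1,\dots,F_{n-4+m})$. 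This is pure linear programming, and it unwinds to the condition $\iota_k>0$ for \emph{all} $k=3,\dots,n$. The decisive observation you are missing is that the $m$ given inequalities $\iota_{m+1},\dots,\iota_n>0$ already force $\iota_3,\dots,\iota_m>0$: writing $\iota_k>0$ as $\kappa_k a+d>0$ with $\kappa_k=\frac{(n-3)n-(k-3)(k-2)}{2(n-k+2)}$, one checks that $\kappa_k$ is minimised at $k=n$ and maximised at some $l\ge m+1$, so the whole family is sandwiched between two inequalities you already assume. Thus the ``lower, non-abelian block'' is not handled by a separate estimate at all; rather, the hypotheses secretly imply a larger set of trace inequalities, after which all diagonal Ricci entries become negative simultaneously. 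Without this hidden implication and the convex-cone reduction, your block-by-block scaling argument has no clear endpoint.
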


\begin{remark}\label{rem:abun}
Note that the set of inequalities in \eqref{it:t2} is abundant -- for all of them to hold it is necessary and sufficient that just two of them hold, namely, $\iota_{n}(Y) > 0$ (the eigenvalue of $\ad_Y$ on the centre of $Q_n$) and one other $\iota_l(Y) > 0$. Due to the fact that this $l$ does not have a nice description we postpone this question till the end of Section~\ref{s:proof} (Theorem~\ref{t:Qn}).

Note also that Case~\eqref{it:t3} is not as ``universal" as it may sound: the truth is that ``the majority" of filiform algebras are characteristically nilpotent, hence admitting no non-nilpotent extensions at all -- see Section~\ref{ss:fili}.
\end{remark}

I would like to thank Yurii Nikonorov for his suggestion to study this topic and many useful ideas, Grant Cairns and Nguyen Thang Tung Le the work with whom stimulated my interest in the question, and Jorge Lauret for useful discussions, including the ones concerning the moment map (see the end of Section~\ref{s:proof}).

\section{Preliminaries}
\label{s:pre}

\subsection{Ricci tensor}
\label{ss:ricci}
Let $G$ be a Lie group with a left-invariant metric $Q$
obtained by the left translations from an inner product $\ip$ on the Lie algebra $\g$ of $G$.
Let $B$ be the Killing form of $\g$, and let $H \in \g$ be the \emph{mean curvature vector} defined by
$\<H, X\> = \Tr \ad_X$.

The Ricci curvature $\mathrm{ric}$ of the metric Lie group $(G,Q)$ at the identity is given by
\begin{equation} \label{eq:riccA1}
    \mathrm{ric}(X)=-\<[H,X],X\>-\frac12 B(X,X)-\frac12 \sum\nolimits_i \|[X,E_i]\|^2
    +\frac14 \sum\nolimits_{i,j} \<[E_i,E_j],X\>^2,
\end{equation}
for $X \in \g$, where $\{E_i\}$ is an orthonormal basis for $(\g, \ip)$ (see e.g. \cite{Ale} or \cite{Bes}).

Equivalently, one can define the Ricci operator $\ric$ of the metric Lie algebra $(\g, \ip)$ (the symmetric operator associated to $\mathrm{ric}$) by
\begin{equation} \label{eq:riccAl}
\ric = -\frac{1}{2} \sum\limits_i \ad_{E_i}^t \ad_{E_i} + \frac{1}{4} \sum\limits_i \ad_{E_i}\ad_{E_i}^t-\frac{1}{2}B -(\ad_H)^{s},
\end{equation}
where $(\ad_H)^{s}=\frac12(\ad_H+\ad_H^t)$ is the symmetric part of $\ad_H$.

In the case when $\g$ is solvable and is a one-dimensional extension of its nilradical $\n$ (this is the only case in this paper for which we will need an explicit formula for the Ricci tensor), one can simplify \eqref{eq:riccAl} further. Choose an orthonormal basis $\{e_i\}$ for $\n$ and a unit vector $f \perp \n$. Denote $T=\Tr \ad_f$. Note that $\g$ is unimodular if and only if $T=0$. Otherwise changing the sign of $f$ if necessary we get $T = \|H\|>0$.

Relative to the basis $\{e_1,...,e_n, f\}$, the matrix of the Ricci operator of the solvable metric Lie algebra $(\g, \ip)$ has the form (see the proof of \cite[Theorem~3]{NiN})
\begin{equation} \label{eq:ricmat}
\Ric = \left( {{\begin{array}{*{20}c}
 R_1 \hfill & R_2 \hfill\\
 R_2^t \hfill & r_3 \hfill\\
 \end{array} }} \right),
\end{equation}
where
\begin{align} \label{eq:r1}
R_1 &= \Ric^\mathfrak{n} + \frac{1}{2} [A,A^t] - T A^s, \\
(R_2)_j &= \frac{1}{2} \sum\limits_{i=1}^{n} \<[f,e_i],[e_i, e_j]\>, \quad j=1, \dots, n, \label{eq:r2}\\
r_3 &= - \Tr((A^s)^2), \label{eq:r3}
\end{align}
and $\Ric^\n$ is the matrix of the Ricci operator of the metric nilpotent Lie algebra $(\n, \ip_{\n})$ relative to the basis $\{e_1, \dots,e_n\}$. As $H = 0$ and $B = 0$ we get from \eqref{eq:riccA1}
\begin{equation}\label{eq:riccinilexplicit}
\<\ric^{\n} X, Y \> = \frac14 \sum_{i,j} \<X, [e_i, e_j]\> \<Y, [e_i, e_j]\> -\frac12 \sum_{i,j} \<[X, e_i], e_j\> \<[Y, e_i], e_j]\>.
\end{equation}
for $X, Y \in \n$.

\subsection{Filiform algebras and their extensions}
\label{ss:fili}

Filiform algebras are ``the least nilpotent" among nilpotent Lie algebras. They have been introduced by Michele Vergne in \cite{Ver} and studied extensively since then. In this paper we are interested mostly not in filiform algebras as such, but in the solvable extensions of them. These have been studied in depth and classified depending on the dimension of the (non-nilpotent) extension in \cite{Sun} or \cite{GK}. It turns out that ``most" filiform algebras are \emph{characteristically nilpotent}, so that any derivation of them is nilpotent.

Clearly, extending a nilpotent algebra by a nilpotent derivation (or more generally, extending by a subspace of derivations, some nonzero elements of which are nilpotent) increases the nilradical, and we do not allow this. Temporarily passing to the complexification, by \cite[Theorem~2]{Sun}, we get that a solvable algebra $\g$ having a filiform algebra $\n$ as its nilradical is an extension by commuting derivations. Furthermore, we define the \emph{rank} of a Lie algebra $\g$ to be the dimension of a maximal abelian subalgebra in the algebra of derivations $\Der(\g)$, all whose nonzero elements are semisimple (such a subalgebra is called a \emph{maximal torus} of derivations; all the maximal tori are conjugate by an automorphism and have the same dimension 
\cite{Che}). By \cite[Th\'{e}or\`{e}me~2]{GK} we have: 

\begin{lemma}\label{l:GK}
Suppose $\n$ is a filiform algebra over $\bc$ of positive rank. Then
\begin{enumerate}[{\rm 1.}]
  \item \label{it:GKrk2}
  Either $\rk \n =2$, in which case
  \begin{enumerate}[{\rm(a)}]
    \item
    either $\n$ is isomorphic to the algebra $L_n$ defined by the relations $[X_1,X_i]=X_{i+1}$, $2 \le i \le n-1$, with a maximal torus of derivations $\Span(\phi_1, \phi_2)$, where $\phi_1(X_i)=iX_i, \; 1 \le i \le n$, and $\phi_2(X_1)=0, \phi_2(X_i)=X_i, \; 2 \le i \le n$;
    \item
    or $\n$ is isomorphic to the algebra $Q_n, \; n=2m$, defined by the relations
    \begin{equation}\label{eq:relQn}
    [X_1,X_i]=X_{i+1}, \; 2 \le i \le n-2, \qquad [X_j, X_{n-j+1}] = (-1)^{j+1} X_n, \; 2 \le j \le n - 1,
    \end{equation}
    with a maximal torus of derivations $\Span(\phi_1, \phi_2)$, where
    \begin{equation}\label{eq:Qnder}
    \begin{gathered}
    \phi_1(X_i)=iX_i, \; 1 \le i \le n-1, \quad \phi_1(X_n)=(n+1)X_n\\  \phi_2(X_1)=0,  \quad \phi_2(X_i)=X_i, \; 2 \le i \le n-1, \quad  \phi_2(X_n)=2X_n.
    \end{gathered}
    \end{equation}
  \end{enumerate}

  \item
  Or $\rk \n =1$, in which case $\n$ is isomorphic to an algebra from the family $A_n^r(\alpha_1, \dots, \alpha_t)$, $1 \le r \le n-4$, defined by the relations of $L_n$ and some other relations, or from the family $B_n^r(\alpha_1, \dots, \alpha_t), \; n=2m, \; 1 \le r \le n-5$, defined by the relations of $Q_n$ and some other relations. In the both cases, $\alpha_1, \dots, \alpha_t$ are parameters satisfying certain algebraic equations, and a maximal torus of derivations is $\Span(\phi)$, where
  \begin{equation}\label{eq:ABder}
  \phi(X_1)=X_1,  \phi(X_i)=(i+r)X_i, \; 2 \le i \le n-1, \quad  \phi(X_n)=(n+2r)X_n.
  \end{equation}
\end{enumerate}
\end{lemma}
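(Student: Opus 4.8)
The plan is to analyze the weight decomposition of $\n$ under a maximal torus $\t \subset \Der(\n)$ of semisimple derivations and to exploit the rigid ``arithmetic'' structure that the filiform condition imposes on the weights. First I would fix an adapted basis: since $\n$ is filiform, its characteristic sequence is $(n-1,1)$, so there is an element $X_1$ for which $\ad_{X_1}$ is a single nilpotent Jordan block of size $n-1$ on a complement of $\br X_1$; setting $X_{i+1}=[X_1,X_i]$ for $2\le i\le n-1$ produces a basis $X_1,\dots,X_n$ in which the ``$L_n$-skeleton'' relations $[X_1,X_i]=X_{i+1}$ hold, and in which $[X_i,X_j]\in\Span(X_{i+j-1},\dots,X_n)$ for $i,j\ge 2$ by the degree (lower-central-series filtration) reasons. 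The first technical point is to arrange this basis to be simultaneously a weight basis for $\t$: this uses that the characteristic element $X_1$ may be chosen as a weight vector and that $\t$ preserves the lower central series, so each graded piece is $\t$-invariant and can be diagonalized compatibly.

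Next I would read off the weights. Writing $w(X_i)\in\t^*$ for the weight of $X_i$, the skeleton relations force $w(X_{i+1})=w(X_1)+w(X_i)$, whence $w(X_i)=w(X_2)+(i-2)\,w(X_1)$ for all $i$ reached by the skeleton. Thus every weight is an integer combination of the two fundamental weights $w(X_1)$ and $w(X_2)$, so the weights span a subspace of $\t^*$ of dimension at most two. Since the weights of a faithful torus action span $\t^*$ (an element of $\t$ pairing to zero with every weight would act trivially on all weight vectors, hence vanish), this gives $\rk \n\le 2$ at once, which is the dichotomy of the lemma.

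For the rank-two case I would show that the only freedom consistent with two independent fundamental weights and with the filiform condition is a single family of extra brackets landing in the centre $\br X_n$. Any additional relation $[X_i,X_j]=c\,X_n$ must respect weights, i.e. $w(X_n)=w(X_i)+w(X_j)$; by the arithmetic formula this is a linear condition on $w(X_1),w(X_2)$, and it is nontrivial (dropping the rank to one) unless it is satisfied simultaneously for all contributing pairs. A direct check shows the latter occurs precisely for the pairing $[X_j,X_{n-j+1}]=(-1)^{j+1}X_n$, which assigns $X_n$ the common weight $2w(X_2)+(n-3)w(X_1)$ independently of $j$ (and forces $n$ even for the skew-symmetric pairing to close up); with no such extra relations one is left with $L_n$. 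Verifying that $\Span(\phi_1,\phi_2)$ is a maximal torus in each case then reduces to checking that the two displayed derivations preserve the relations and are linearly independent.

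The hard part is the rank-one case. Here the ratio of the fundamental weights is pinned down, the torus is $\Span(\phi)$ with the eigenvalue pattern \eqref{eq:ABder}, and one must enumerate all filiform brackets compatible with this single grading. This is no longer forced by weights alone: it requires imposing the Jacobi identity on the admissible weight components and solving the resulting polynomial relations on the structure constants, which is exactly what produces the parametrized families $A_n^r(\alpha_1,\dots,\alpha_t)$ and $B_n^r(\alpha_1,\dots,\alpha_t)$ together with the algebraic equations on the parameters. Organising this computation — effectively determining the $\phi$-weight components of the deformation space $H^2(\n;\n)$ of the model algebra and cutting out the integrable locus — is where the real work lies, and I would expect it, rather than the rank bound, to be the principal obstacle.
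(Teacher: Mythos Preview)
The paper does not prove this lemma at all: it is quoted directly from \cite[Th\'{e}or\`{e}me~2]{GK}, with the sentence ``By \cite[Th\'{e}or\`{e}me~2]{GK} we have'' immediately preceding the statement, and no proof environment follows. All the classification work, including the families $A_n^r$, $B_n^r$ and the algebraic conditions on the parameters, is done in the Goze--Khakimdjanov paper, so there is nothing in the present paper to compare your argument against.

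That said, your sketch has a genuine gap right at the start. You claim that the characteristic element $X_1$ may be chosen to be a weight vector for the maximal torus $\t$, so that the full skeleton $X_{i+1}=[X_1,X_i]$, $2\le i\le n-1$, holds in a weight basis. For $Q_n$ this is impossible. With respect to the torus $\Span(\phi_1,\phi_2)$ of \eqref{eq:Qnder} the weight spaces are exactly the lines $\bc X_i$, and neither $X_1$ nor $X_2$ is characteristic (for the $X_1$ of \eqref{eq:relQn} one has $[X_1,X_{n-1}]=0$, so $\ad_{X_1}^{\,n-2}=0$); a genuine characteristic element such as $X_1+X_2$ is not a weight vector since $\phi_1(X_1+X_2)=X_1+2X_2$. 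Your two weight computations for $X_n$ then collide: the skeleton gives $w(X_n)=w(X_2)+(n-2)w(X_1)$, while the extra bracket gives $w(X_n)=2w(X_2)+(n-3)w(X_1)$, forcing $w(X_1)=w(X_2)$ and (by your own argument) $\rk\n\le1$. In other words, your scheme as written would wrongly exclude $Q_n$ from the rank-two list.

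The repair is to build the weight basis from the lower central series rather than from a characteristic element: take $X_1,X_2$ to be weight vectors modulo $[\n,\n]$, define $X_{i+1}=[X_1,X_i]$ only for $2\le i\le n-2$, and choose $X_n$ separately as a weight vector spanning the one-dimensional centre. Then the dichotomy $L_n$ versus $Q_n$ is precisely whether $[X_1,X_{n-1}]$ is nonzero or zero in this weight-compatible basis, which matches the two presentations in the lemma. With that correction the rest of your plan (the rank bound from two fundamental weights, and the Jacobi analysis in rank one) is essentially what \cite{GK} do.
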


As usual, the relations between the basis elements for $L_n$ and for $Q_n$ which we do not list are zero (unless they follow from the given ones by the skew-symmetry). Note that in \cite{GK} there is one other class of algebras of rank one, $C_n$, but as it is shown in \cite[Remarque~1]{GV} all the algebras of class $C_n$ are isomorphic to $Q_n$.

We intentionally do not give all the details on the families $A_n^r$ and $B_n^r$ --- for our purposes we only need to know $\phi$.

\begin{remark}\label{rem:Qn}
Note that when considering the algebra $Q_n, \; n=2m$, both over $\br$ and over $\bc$, we can assume that $n \ge 4$, as $Q_4$ is given by the relations $[X_1,X_2]=X_3, \; [X_2, X_3]=-X_4$ and thus is isomorphic to $L_4$ with the basis $\{-X_2,X_1,X_3,X_4\}$. We also note that changing the basis $X_i$ for $Q_n$ to the basis $Y_1=X_1+X_2, \; Y_i=X_i, \, i > 1$, we get the relations $[Y_1,Y_i]=Y_{i+1}, \; 2 \le i \le n-1, \; [Y_i,Y_{n-i+1}]=(-1)^{i+1}Y_n, \, 2 \le i \le n-1$ (which differ from \eqref{eq:relQn} just by $[Y_1,Y_{n-1}=Y_n]$) that might sometimes be more convenient to use.
\end{remark}

\smallskip

Getting back to the real solvable Lie algebra $\g$ with the filiform nilradical $\n$ we need to exercise a certain caution, as two real nilpotent algebras can be isomorphic over $\bc$, without being isomorphic over $\br$ (this phenomenon can be observed even in the low dimension classification lists). The complexification $\n^\bc$ of $\n$ must be one of the algebras from Lemma~\ref{l:GK}. Now if $\rk \n^\bc = 1$, then $\g^\bc$ is the one-dimensional extension of $\n^\bc$ by a complex nonzero multiple of the derivation $\phi$ given by \eqref{eq:ABder}. It follows that $\g$ is the one-dimensional extension of $\n$ by a complex nonzero multiple of the derivation $\phi$. But as $\phi$ is a real linear operator and as all the eigenvalues of $\phi$ have different absolute value (so that no eigenvalues of $c \phi, c \in \bc \setminus \{0\}$, can possibly be complex conjugate), we obtain that $\g$ is the extension of $\n$ by a derivation all of whose eigenvalues are positive (up to changing the sign of $\phi$), that is, $\g$ admits an inner product of negative Ricci curvature by  Theorem~\ref{t:neg}\eqref{it:neg2}.

We now proceed to the algebras of rank two. In the next lemma, $L_n$ and $Q_n$ are complex Lie algebras given by the relations of Lemma~\ref{l:GK}\eqref{it:GKrk2}. We denote $L_n^\br$ and $Q_n^\br$ respectively the real Lie algebras defined by the same relations.
\begin{lemma}\label{l:real}
\begin{enumerate}[{\rm 1.}]
  \item \label{it:real1}
  Let $\n$ be a real, $n$-dimensional, filiform Lie algebra given by the relations $[Y_1,Y_i]=Y_{i+1}$, for $2 \le i \le n-1$ and $[Y_i,Y_j]=K_{ij}Y_n$ for $2 \le i, j \le n-1$, where $K=(K_{ij})$ is a nonsingular, skew-symmetric matrix. Then $\n$ is isomorphic to $Q_n^\br$.
  \item \label{it:real2}
  Let $\n$ be a real filiform Lie algebra whose complexification is isomorphic to $L_n$ (resp. $Q_n$) over $\bc$. Then $\n$ is isomorphic over $\br$ to the real algebra $L_n^\br$ (resp. $Q_n^\br$). 
\end{enumerate}
\end{lemma}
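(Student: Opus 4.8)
The plan is to prove the two parts in turn, with part~\ref{it:real1} serving as the computational engine that part~\ref{it:real2} will feed into in the $Q_n$ case. For part~\ref{it:real1} I would first extract the Jacobi constraints. Writing $N=\ad_{Y_1}$, the only nontrivial Jacobi triples are $(Y_1,Y_i,Y_j)$ with $i,j\ge 2$, and since $Y_n$ is central these reduce to $K_{i+1,j}=K_{j+1,i}$, equivalently (using skew-symmetry of $K$) $K_{i+1,j}+K_{i,j+1}=0$. Thus $N$ is skew-adjoint for the form $K$, so $K_{ij}$ depends only on $i+j$ up to an alternating sign; comparing $(i,j)$ with $(j,i)$ forces $K_{ij}=0$ unless $i+j$ is odd, while $N^{m}Y_l=Y_{l+m}$ (which vanishes once the index exceeds $n$) forces $K_{ij}=0$ whenever $i+j>n+1$. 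In particular $n-2$ is even, and each anti-diagonal $i+j=s$ carries a single scalar $c_s$. Because $K_{ij}=0$ for $i+j>n+1$, the matrix $K$ is anti-triangular, so reversing its columns makes it triangular and $\det K=\pm\prod_{i}K_{i,n+1-i}$; as all main-anti-diagonal entries equal $\pm c_{n+1}$, nonsingularity of $K$ is equivalent to $c_{n+1}:=K_{2,n-1}\ne 0$.

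I would then normalize $K$. The lower anti-diagonals are killed by a unipotent change of basis $\tilde Y_2=Y_2+\sum_{m\ge 3}a_mY_m$, $\tilde Y_i=N^{i-2}\tilde Y_2$, which preserves $[\tilde Y_1,\tilde Y_i]=\tilde Y_{i+1}$ and fixes $\tilde Y_n=Y_n$: the new scalars satisfy $\tilde c_s=c_s+(\text{terms in the }a_m\text{ and in }c_{s'}\text{ with }s'>s)$, so the off-top scalars are eliminated recursively from the top down, each step solving one linear equation whose leading coefficient is a nonzero multiple of $c_{n+1}$. Finally a rescaling $Y_i\mapsto tY_i$ ($i\ge 2$) fixes $|c_{n+1}|$, and if needed the flip $Y_1\mapsto -Y_1$ (which sends $c_{n+1}\mapsto -c_{n+1}$ while leaving the now-vanishing lower anti-diagonals zero) fixes its sign, bringing $K$ to the anti-diagonal form of $Q_n^{\br}$ in the basis of Remark~\ref{rem:Qn}; this gives $\n\cong Q_n^{\br}$. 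The delicate point here is checking that this elimination is genuinely triangular, i.e. that the quadratic corrections never interfere with the leading dependence on $c_{n+1}$.

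For part~\ref{it:real2} I would argue through characteristic ideals. Let $\sigma$ be the conjugate-linear involution of $\n^{\bc}$ fixing $\n$; any complex ideal singled out by a property invariant under conjugate-linear automorphisms is $\sigma$-stable, hence the complexification of a real ideal of $\n$. In the $L_n$ case I would show that $\ig=\Span(X_2,\dots,X_n)$ is the \emph{unique} codimension-one abelian ideal of $L_n$: every codimension-one ideal contains $[\n,\n]$ and so has the form $\br(\alpha X_1+\beta X_2)+[\n,\n]$, which is abelian only when $\alpha=0$. Hence $\ig\cap\n$ is a codimension-one abelian ideal of $\n$; choosing $Y_1\in\n\setminus(\ig\cap\n)$ and putting the nilpotent operator $\ad_{Y_1}|_{\ig\cap\n}$ (a single Jordan block by filiformness) into real shift form $Y_i\mapsto Y_{i+1}$ yields a basis with $[Y_1,Y_i]=Y_{i+1}$ and all remaining brackets zero, i.e. $\n\cong L_n^{\br}$.

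In the $Q_n$ case the same scheme produces the hypotheses of part~\ref{it:real1}. Here $\z(\n)$ is one-dimensional (the center complexifies), and $\ig=\Span(X_2,\dots,X_n)$ is the unique codimension-one ideal $I$ of $Q_n$ with $[I,I]\subseteq\z$: for $\alpha\ne 0$ one computes $[\alpha X_1+\beta X_2,X_3]=\alpha X_4\notin\z$, whereas $[\ig,\ig]=\z$. Thus $\ig\cap\n$ is a $\sigma$-defined codimension-one ideal with $[\ig\cap\n,\ig\cap\n]\subseteq\z(\n)$; picking $Y_1$ outside it and reducing $\ad_{Y_1}$ to shift form gives a real basis with $[Y_1,Y_i]=Y_{i+1}$ and $[Y_i,Y_j]=K_{ij}Y_n$ for a skew-symmetric $K$, whose rank equals that of the corresponding form on $Q_n$, which is full; so $K$ is nonsingular and part~\ref{it:real1} gives $\n\cong Q_n^{\br}$. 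The main obstacle throughout part~\ref{it:real2} is establishing the \emph{uniqueness} of the relevant characteristic ideal, since it is precisely this rigidity that makes the real form unique and lets the conjugation argument close.
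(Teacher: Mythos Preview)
Your proof of part~\ref{it:real1} follows the same route as the paper: Jacobi gives $K_{i+1,j}+K_{i,j+1}=0$, whence $K$ is supported on the odd anti-diagonals with $i+j\le n+1$; nonsingularity forces $K_{2,n-1}\ne 0$; a unipotent change $Y_2\mapsto Y_2+\sum_{m\ge3}a_mY_m$ kills the lower anti-diagonals; a final rescaling normalises the top one. One small slip: the rescaling $Y_i\mapsto tY_i$ ($i\ge 2$) multiplies $c_{n+1}$ by $t$, not by $|t|$, so it already fixes both magnitude \emph{and} sign---your flip $Y_1\mapsto -Y_1$ is unnecessary (and, as written, breaks $[Y_1,Y_i]=Y_{i+1}$ unless accompanied by $Y_i\mapsto(-1)^iY_i$).

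For part~\ref{it:real2} your route is genuinely different from the paper's. The paper works by hand inside $\n^{\bc}$: it picks $Y_1,Y_2\in\n$ with suitable leading $X_1,X_2$-coefficients, builds $Y_3,\dots,Y_n$ by iterated brackets, and then uses the reality constraint (that the resulting vectors and their brackets lie in $\n$) to force stray complex coefficients to vanish in the $L_n$ case, or to show the resulting $K$ is nonsingular in the $Q_n$ case before invoking part~\ref{it:real1}. Your argument is more structural: you single out a \emph{unique} codimension-one ideal $\ig$ by an intrinsic property (abelian for $L_n$; $[\ig,\ig]\subset\z$ for $Q_n$), infer it is stable under the conjugation $\sigma$ and hence defined over $\br$, and then put $\ad_{Y_1}|_{\ig\cap\n}$ into Jordan form. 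This bypasses the coefficient chase and makes the reason for uniqueness of the real form transparent.

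One point to tighten in the $Q_n$ case: you need $\ad_{Y_1}|_{\ig\cap\n}$ to be a single Jordan block, but for an arbitrary $Y_1\notin\ig$ this can fail (for instance $\ad_{X_1}$ annihilates both $X_{n-1}$ and $X_n$). You must choose $Y_1$ with nonzero $X_2$-component as well; this is always possible since $(\ig\cap\n)^{\bc}=\ig$ surjects onto the $X_2$-coordinate, so some $Z\in\ig\cap\n$ has nonzero $X_2$-component and you may replace $Y_1$ by $Y_1+Z$. With this adjustment your argument is complete.
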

\begin{proof}
1. For $K$ to possibly be nonsingular $n$ must be even (note that the elements of $K$ are labelled by $i,j=2, \dots, n-1$). From the Jacobi identities it follows that $K_{i,j+1}+K_{i+1,j}=0$ for $2 \le i, j \le n-2$ and $K_{i+1,n-1}=0$ for $2 \le i \le n-2$. From this and the skew-symmetry we obtain $K_{ij}=0$ when $i+j > n+1$ or when $i+j$ is even. When $k=i+j$ is odd and $k \le n+1$ we get $K_{2,k-2}= -K_{3,k-3}= K_{4,k-4}=\dots = -K_{k-2,2}$ and in particular, $K_{2,n-1} \ne 0$ as $K$ is nonsingular. Introduce a new basis $Y_i'$ for $\n$ by putting $Y'_1=Y_1, \; Y_2'=Y_2+a_3Y_3+a_4Y_4+ \dots, \; Y'_{i+1}=[Y'_1,Y'_i], \; 2 \le i \le n-1$. Then $Y'_n=Y_n$ and $[Y'_i, Y'_j]=K'_{ij}Y'_n$ for a skew-symmetric matrix $K'$. The arguments similar to the ones above show that $K'_{ij}=0$ when $i+j > n+1$ or when $i+j$ is even and that $K_{2,k-2}= -K_{3,k-3}= K_{4,k-4}=\dots = -K_{k-2,2}$ for odd $k \le n+1$. Furthermore, $K'_{2,n-1}=K_{2,n-1}$ and then $K'_{2,n-3}=2a_4K_{2,n-1}+P_3(a_3), \; K'_{2,n-5}=2a_6K_{2,n-1}+P_5(a_3,a_4,a_5), \dots $, where $P_3, P_5, \dots$ are certain polynomials. As $K_{2,n-1} \ne 0$, we can successively choose $a_4, a_6, \dots$ in such a way that $K'_{2i}=0$ for all $i < n-1$. Then, relative to the basis $Y_i'$, the algebra $\n$ is given by the relations $[Y'_1, Y'_i]=Y'_{i+1} , \; 2 \le i \le n-1$, and $[Y_i',Y'_{n+1-i}]=(-1)^{i+1}aY'_n$ for $2 \le i \le n-1$ (and all the other brackets $[Y'_i, Y'_j]$ with $2 \le i < j \le n-1$ vanish), where $a = -K_{2,n-1} \ne 0$. Changing $Y'_i, \ i \ge 2$, to $a^{-1}Y'_i$ we get the relations for $Q_n^\br$ (as given in Remark~\ref{rem:Qn}).

2. In the both cases, $\n$ can be viewed as a real subspace of $\n^\bc$ which is closed under the Lie bracket.

In the case $\n^\bc = L_n$ choose two elements $Y_1=\sum_i a_iX_i, \; Y_2=\sum_i b_i X_i \in \n$ such that $a_1 \ne 0$ and $a_1b_2 - a_2b_1 \ne 0$ (this is always possible as the complexification of $\n$ must be the whole $L_n$). Replacing $Y_2$ by $Y_2- \Re(b_1/a_1)Y_1$ if necessary we can assume that $b_1/a_1$ is purely imaginary. Now the vectors $Y_3=[Y_1,Y_2]=(a_1b_2 - a_2b_1)X_3 + (\dots), \; Y_4=[Y_1,Y_3]=a_1(a_1b_2 - a_2b_1)X_4 + (\dots), \dots, Y_n=[Y_1,Y_{n-1}]= a_1^{n-3}(a_1b_2 - a_2b_1)X_n$ (where $(\dots)$ is a linear combination of the $X_i$ with higher subscripts) is a real basis for $\n$. Moreover, by construction, $[Y_1,Y_i]=Y_{i+1}$ for $2 \le i \le n-1$ and $[Y_i,Y_j]=0$ for $i,j \ge 3$. Furthermore, if $b_1 \ne 0$, then the vector $[Y_2,Y_3]=b_1(a_1b_2 - a_2b_1)X_4 + (\dots)=(b_1/a_1) Y_4  + (\dots)$ does not belong to $\n$, as $b_1/a_1$ is a nonzero imaginary number. It follows that $b_1=0$, hence $[Y_2,Y_i]=0$ for $i \ge 3$, as required.

In the case $\n^\bc = Q_n$, we start as above by constructing the basis $Y_i$ for $\n$ with $[Y_1,Y_i]=Y_{i+1}, \; 2 \le i \le n-1$, such that $Y_i=c_iX_i + (\dots)$, where $(\dots)$ is a linear combination of the $X_j, \; j>i$, and $c_i \ne 0$. In particular, $Y_n=c_n X_n, \; c_n \ne 0$, lies in the centre of $\n$. Then $[Y_i,Y_j]=K_{ij}Y_n$ for $2 \le i, j \le n-1$. The matrix $K=(K_{ij})$ is real and skew-symmetric. It cannot be singular, as otherwise the real span of the vectors $Y_2, \dots, Y_n$ in $\n$ would be a subalgebra with the centre of dimension greater than one, hence the same would be true for their complex span which is the subalgebra $\Span(X_2, \dots, X_n) \subset \n^\bc$. But the centre of this subalgebra has dimension one. It follows that $K$ is nonsingular and so the claim follows from assertion~\ref{it:real1}.
\end{proof}

From Lemma~\ref{l:GK}\eqref{it:GKrk2} and Lemma~\ref{it:real2}\eqref{it:real2} it follows that we only need to prove Theorem~\ref{th:fili} for solvable extensions of the filiform algebras $L_n^\br$ and $Q_n^\br$. The proof for $L_n^\br$ is given in \cite[Theorem~4]{NN}, so it remains only to consider the case $\n=Q_n^\br, \; n=2m$, where we can assume that $m> 2$ by Remark~\ref{rem:Qn} and where, from now on, we will drop the superscript $\br$, so that $Q_n$ denotes the real Lie algebra given by the relations~\eqref{eq:relQn}.

We will need a slightly more detailed (as compared to \eqref{eq:Qnder}) knowledge of the derivations of $Q_n$ given in the following Lemma.

\begin{lemma}\label{l:derQn}
Relative to the basis $X_i$ from \eqref{eq:relQn}, the matrix of any derivation of $Q_n$ is given by
\begin{equation}\label{eq:derQn}
    M=\left(
\begin{array}{c@{}c@{}}
 \begin{array}{ccc}
         a & b& \\
         0& d & \\
        & & a+d \\
  \end{array} & \mbox{\Huge $0$ } \\
  \mbox{\Huge $\ast$ } & \begin{array}{cccc}
                       2a+d & & &\\ & \ddots & &\\ & & (n-3)a+d & \\ & & & (n-3)a+2d \\
                      \end{array}
\end{array}\right),
\end{equation}
for some $a, d, b \in \br$ and possibly some nonzero elements below the diagonal marked by the asterisk.
\end{lemma}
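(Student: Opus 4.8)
The plan is to extract the structure of $M$ from two facts: that $D$ preserves every characteristic ideal of $Q_n$, and that $D$ is pinned down by its compatibility with the defining relations \eqref{eq:relQn}. First I would use that the members of the lower central series $\ig_k=\Span(X_k,\dots,X_n)$, $k=3,\dots,n$, together with the centre $\z=\br X_n$, are characteristic, so that $DX_k\in\ig_k$ for all $k\ge 3$. This says exactly that the coordinates of $DX_k$ along $X_1,\dots,X_{k-1}$ vanish, i.e. all entries of $M$ lying strictly above the diagonal in columns $3,\dots,n$ are zero. Since column $1$ has no entries above the diagonal and column $2$ has only the slot $(1,2)$, it follows at once that the sole entry of $M$ that can be nonzero above the diagonal is $M_{12}=:b$.

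Next I would compute the diagonal by induction along $X_{k+1}=[X_1,X_k]$. Setting $a=M_{11}$, $d=M_{22}$ and applying $D$ gives $DX_{k+1}=[DX_1,X_k]+[X_1,DX_k]$; modulo $\ig_{k+2}$ one has $[DX_1,X_k]\equiv aX_{k+1}$ (the term $M_{21}[X_2,X_k]$ vanishes for $k\le n-2$ and the brackets $[X_j,X_k]$ with $j\ge3$ lie in $\z$) and $[X_1,DX_k]\equiv M_{kk}X_{k+1}$ (since $[X_1,\ig_{k+1}]\subseteq\ig_{k+2}$), whence the recursion $M_{k+1,k+1}=a+M_{kk}$ and so $M_{kk}=(k-2)a+d$ for $2\le k\le n-1$. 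For the central eigenvalue I would instead apply $D$ to the relation $[X_2,X_{n-1}]=-X_n$: since $[X_j,X_{n-1}]=0$ both for $j=1$ and for $j\ge3$, this reduces to $M_{nn}X_n=(M_{22}+M_{n-1,n-1})X_n$, giving $M_{nn}=(n-3)a+2d$, as required.

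The one substantive point, which I expect to be the crux, is the vanishing of the subdiagonal entry $M_{21}$ --- the coordinate of $DX_1$ along $X_2$ --- which is precisely the feature separating $Q_n$ from $L_n$. Here I would apply $D$ to the identity $[X_1,X_{n-1}]=0$, which holds in $Q_n$ because the chain $[X_1,X_i]=X_{i+1}$ terminates at $i=n-2$, obtaining $[DX_1,X_{n-1}]+[X_1,DX_{n-1}]=0$. The second summand vanishes because $DX_{n-1}\in\ig_{n-1}=\Span(X_{n-1},X_n)$ while $[X_1,X_{n-1}]=[X_1,X_n]=0$; in the first summand the only bracket $[X_j,X_{n-1}]$ that survives is $[X_2,X_{n-1}]=-X_n$, so the equation collapses to $-M_{21}X_n=0$ and hence $M_{21}=0$. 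All entries below the diagonal that are not thereby forced to vanish remain unconstrained; these are the asterisks. A cleaner bookkeeping of the same computations is available by decomposing $D$ into homogeneous components for the grading of $\Der(Q_n)$ induced by $\ad\phi_1$, but the argument via characteristic ideals above is self-contained.
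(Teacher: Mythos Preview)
Your proof is correct and follows essentially the same route as the paper: invariance of the lower central series forces the vanishing of the above-diagonal entries in columns $3,\dots,n$, the relations $X_{k+1}=[X_1,X_k]$ and $[X_2,X_{n-1}]=-X_n$ yield the diagonal, and applying $D$ to $[X_1,X_{n-1}]=0$ gives $M_{21}=0$. The paper's proof is just a terse outline of exactly these three steps; your version fills in the details carefully, and the observation that $[X_2,X_k]=0$ for $k\le n-2$ (so the diagonal recursion does not depend on knowing $M_{21}=0$ in advance) is the one point worth keeping explicit.
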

\begin{proof}
As the members of the lower central series are invariant with respect to a derivation we obtain that all the entries of $M$ above the diagonal in the columns starting from the third are zeros. Taking $M_{11}=a, \; M_{22}=d$ we can find all the diagonal entries from the relations \eqref{eq:relQn}. The fact that $M_{21}=0$ (which we will not need) follows by applying the derivation to $[X_1,X_{n-1}]=0$.
\end{proof}

\section{Proof of Theorem~\ref{th:fili}}
\label{s:proof}

As we have seen in Section~\ref{ss:fili}, we only need to consider the case when $\g$ is a one-dimensional extension of $\n=Q_n, \; n= 2m, \; m > 2$, by a non-nilpotent derivation. Otherwise, the case when $\g$ is a one-dimensional extension of $L_n$ was treated in \cite[Theorem~4]{NN} (this proves Case~\eqref{it:t1} of Theorem~\ref{th:fili}), and in all the other cases when $\g$ is a non-nilpotent solvable Lie algebra having a filiform nilradical $\n$, it follows from Lemma~\ref{l:GK} and the arguments in Section~\ref{ss:fili} that for some $Y \in \g$ by which we are extending $\n$, the restriction of the derivation $\ad_Y$ to $\n$ has all eigenvalues positive, and so an inner product with $\Ric < 0$ exists by Theorem~\ref{t:neg}\eqref{it:neg2}.

From Lemma~\ref{l:derQn} we see that the eigenvalues of any derivation of $Q_n$ are $a,d,a+d, 2a+d, \dots, (n-3)a+d, (n-3)+2d$. Now if we extend $\n=Q_n$ by at least three derivations, then one of them will be nilpotent, which we do not allow. If we extend $Q_n$ by exactly two derivations no nonzero linear combination of which is nilpotent, then there will exist a linear combination of them with all the eigenvalues positive (say the one with $a=d=1$). Then an inner product with $\Ric < 0$ exists by Theorem~\ref{t:neg}\eqref{it:neg2}. This, combined with the argument in the previous paragraph, completes the proof of Case~\eqref{it:t3} of Theorem~\ref{th:fili}.

We can therefore assume that $\g$ is an extension of $\n$ by exactly one derivation (of the form \eqref{eq:derQn}), which is not nilpotent, that is, either $a$ or $d$ is nonzero. What is more, as a unimodular $\g$ does not admit an inner product with $\Ric < 0$ by Theorem~\ref{t:dm}, we can assume (changing the sign if necessary) that $T=\Tr M= \frac12 (n-1)(n-2)a + nd > 0$. Computing the numbers $\iota_k$ from Case~\eqref{it:t2} of Theorem~\ref{th:fili} for such a derivation we obtain that the only claim which remains to be proved is equivalent to the following statement. 

\begin{proposition}\label{p:Qn} 
Let $\g$ be a one-dimensional extension of $\n=Q_n, \; n = 2m, m > 2$, by a derivation \eqref{eq:derQn}. There exists an inner product on $\g$ with $\Ric < 0$ if and only if
\begin{equation}\label{eq:iotak}
    \iota_k=\Big((n-3)+\sum_{i=k-2}^{n-3}i\Big)a + (n-k+2)d = \frac12((n-3)n-(k-3)(k-2))a + (n-k+2)d > 0,
\end{equation}
for all $k = m+1, \dots, n$ (up to changing the sign of $M$ if necessary).
\end{proposition}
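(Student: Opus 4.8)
The plan is to prove the two directions of Proposition~\ref{p:Qn} separately, relying on the explicit Ricci matrix~\eqref{eq:ricmat}--\eqref{eq:r3} for one-dimensional solvable extensions. The necessity direction should follow from the general obstruction in Theorem~\ref{t:neg}\eqref{it:neg1}: if $\g$ admits an inner product with $\Ric<0$, then there is $Y\in\g$ with $\Tr\ad_Y>0$ and all eigenvalues of $(\ad_Y)_{|\z}$ having positive real part, where $\z=\br X_n$ is the one-dimensional centre of $Q_n$. First I would translate these two conditions into statements about $a$ and $d$. The eigenvalue of $M$ on the centre is $(n-3)a+2d$, which equals $\iota_n$, so the centre condition gives $\iota_n>0$; the trace condition gives $T>0$. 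The main work is then to show that for a derivation of the form~\eqref{eq:derQn}, the full system $\iota_k>0$ for $k=m+1,\dots,n$ is forced (perhaps after invoking Remark~\ref{rem:abun}, i.e. that $\iota_n>0$ together with one other inequality suffices). I expect the necessity half to reduce, via the abundance observation, to showing that the region cut out by $T>0$ and $\iota_n>0$ does \emph{not} always lie inside the region $\{\iota_k>0\ \forall k\}$, so that a genuinely stronger condition is needed, and that the obstruction from Theorem~\ref{t:neg}\eqref{it:neg1} is precisely strong enough to supply the missing inequalities.

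**The sufficiency direction: constructing the metric.**

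For sufficiency I would assume~\eqref{eq:iotak} holds for all $k=m+1,\dots,n$ and build an explicit inner product with $\Ric<0$. The natural ansatz is a diagonal (or block-diagonal) metric in a basis adapted to~\eqref{eq:relQn}, with a scaling parameter, say $\|X_i\|^2 = t^{c_i}$ for suitable exponents $c_i$ and large $t$. With such a metric the structure constants are rescaled, and I would use the block form~\eqref{eq:ricmat}: the diagonal entries of $R_1=\Ric^\n+\tfrac12[A,A^t]-TA^s$ together with $r_3=-\Tr((A^s)^2)$ must be controlled, and the off-diagonal coupling $R_2$ from~\eqref{eq:r2} must be shown to be negligible. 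The key is that $\Ric^\n$ for a nilpotent algebra, computed from~\eqref{eq:riccinilexplicit}, has a definite sign structure: its negative-definite part comes from the $-\tfrac12\sum\|[X,e_i]\|^2$ terms while the positive part comes from the $+\tfrac14$ term, and by scaling one can make the bad (positive) contributions small relative to the dominant $-TA^s$ term, whose eigenvalues are governed exactly by the diagonal of~\eqref{eq:derQn}. The inequalities~\eqref{eq:iotak} should be precisely what guarantees that the relevant weighted sums of diagonal entries stay negative in the limit.

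**The main obstacle.**

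The hard part will be the sufficiency direction, and specifically controlling the interaction between the nilpotent Ricci term $\Ric^\n$ and the extension term $-TA^s$ across the whole range of $k$. Unlike the abelian or Heisenberg cases, $Q_n$ has a rich bracket structure — both the chain relations $[X_1,X_i]=X_{i+1}$ and the pairing relations $[X_j,X_{n-j+1}]=(-1)^{j+1}X_n$ feeding into the centre — so $\Ric^\n$ is not diagonal and couples many basis vectors. The delicate point is that the inequalities are indexed by $k=m+1,\dots,n$, i.e. by the \emph{upper half} of the central series $\ig_k$, and the quantities $\iota_k$ are partial traces $\Tr((\ad_Y)_{|\ig_k})$; I expect that one must diagonalize or block-triangularize the Ricci operator so that each $\iota_k>0$ controls exactly one ``slot'' in a sign analysis, and that the abundance phenomenon of Remark~\ref{rem:abun} reflects hidden linear dependencies among these slots. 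Making the limiting/scaling argument rigorous — ensuring that no off-diagonal term $(R_2)_j$ or commutator entry $\tfrac12[A,A^t]$ spoils the negative-definiteness at finite but large $t$ — is where the real care is needed, and I would prove negative-definiteness by exhibiting the limiting diagonal form and then applying a perturbation/continuity argument.
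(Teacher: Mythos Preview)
Your necessity plan has a genuine gap. Theorem~\ref{t:neg}\eqref{it:neg1} applied to $\n=Q_n$ yields only $\iota_n>0$ (the centre eigenvalue) and $T>0$; these two linear inequalities in $(a,d)$ do \emph{not} force the remaining $\iota_k>0$. For instance, with $n=6$, $a=-1$, $d=1.8$ one has $T=0.8>0$ and $\iota_6=0.6>0$ but $\iota_4=-0.8<0$. So the hope that ``the obstruction from Theorem~\ref{t:neg}\eqref{it:neg1} is precisely strong enough to supply the missing inequalities'' is simply false, and no amount of rearranging Remark~\ref{rem:abun} will fix it: the abundance observation says the inequalities $\iota_{m+1},\dots,\iota_n>0$ reduce to $\iota_n>0$ and $\iota_l>0$ for a specific $l\approx n+2-\sqrt{2n}$, and $\iota_l>0$ is strictly stronger than $T>0$. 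The paper's necessity argument does not use Theorem~\ref{t:neg}\eqref{it:neg1} at all beyond $T>0$. Instead, for an \emph{arbitrary} inner product with $\Ric<0$ it Gram--Schmidts the filtration $\ig_k$, obtaining a lower-triangular $A$, and then computes for each $k$ the partial trace
\[
\Tr(\pi_k R_1)=\Tr(\pi_k\Ric^\n)+\tfrac12\Tr(\pi_k[A,A^t])-T\,\Tr(\pi_k A^s),
\]
where $\pi_k$ is the orthogonal projection to $\ig_k$. The point is that lower-triangularity makes the second term nonnegative and the third equal to $\iota_k$, while a direct calculation using the bracket structure of $Q_n$ (crucially, that $[e_i,e_j]=0$ for $i+j>n+1$ with $i,j\ge2$, combined with $k\ge m+1$) makes the first term nonnegative as well. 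Negativity of $\Tr(\pi_k R_1)$ then forces $\iota_k>0$. The idea you are missing is this partial-trace argument, one inequality per characteristic ideal.

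For sufficiency your instinct is right but the execution is missing two key steps. First, the paper degenerates $\g$ (via $T_s=e^{sN}$ for a positive diagonal derivation $N$) to the limit algebra $\bar\g$ in which $(\ad_Y)_{|\n}$ is genuinely diagonal; by \cite[Proposition~1]{NN} it suffices to work on $\bar\g$, and there $R_2=0$ and $[A,A^t]=0$ exactly, so your worries about off-diagonal perturbations disappear. Second, rather than a limiting/continuity argument in a single parameter $t$, the paper writes the diagonal entries of $R_1$ as the components of $\tfrac14\grad\phi(x)-T(aV_1+dV_2)$ for a finite sum of exponentials $\phi$, and invokes a convexity lemma (the range of $\grad\phi$ is exactly the open cone $\Conv(F_\alpha)$) to reduce the question to whether $aV_1+dV_2$ lies in an explicit polyhedral cone. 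That reduction turns the problem into linear programming, and solving the resulting system is where the inequalities $\iota_{m+1},\dots,\iota_n>0$ are used (and where one sees they imply $\iota_3,\dots,\iota_m>0$ too). A single-parameter scaling $t^{c_i}$ with fixed exponents would only explore a ray in $x$-space and would not reach the required critical point of the convex function in general.
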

\begin{remark}\label{rem:sign}
Note that inequalities~\eqref{eq:iotak} in fact imply $T > 0$, as $T=\frac{2}{n-3}\iota_{n-1}+\frac{n^2-3n-6}{2(n-3)}\iota_n$, with both coefficients being positive when $n > 4$.
\end{remark}
\begin{proof}
\textbf{Necessity.} Let $\g$ be the one-dimensional extension of $\n=Q_n$ by a derivation $\phi$ whose matrix relative to the basis $X_i$ satisfying \eqref{eq:relQn} is given by \eqref{eq:derQn}. Suppose that $\ip$ is an inner product on $\g$ for which $\Ric < 0$. Let $f$ be a unit vector orthogonal to $\n$. Note that relative to the (in general, nonorthonormal) basis $X_i$ for $\n$ the matrix of the restriction of $\ad_f$ to $\n$ is proportional to $M$ (some of the entries below the diagonal may change, but we don't care). Up to scaling the inner product and changing the sign of $f$ we can assume that $(\ad_f)_{\n}=M$ with $T=\Tr M= \frac12 (n-1)(n-2)a + nd > 0$. If $a=d$, then the inequalities \eqref{eq:iotak} are satisfied and there is nothing to prove. Assuming $a \ne d$ we can modify the basis $X_i$ by changing $X_2$ to $X_2-b(a-d)^{-1}X_1$ to eliminate $b$ from $M$, still keeping the relations \eqref{eq:relQn} unchanged. We keep the notation $X_i$ for this new basis. Finally, let $e_n, e_{n-1}, \dots, e_2, e_1$ be the orthonormal basis for $\n$ constructed by the Gram-Schmidt procedure from the basis $X_n, X_{n-1}, \dots, X_2, X_1$. We have the following Lemma.

{
\begin{lemma}\label{l:derQnorth}
{\ }

\begin{enumerate}[{\rm 1.}]
  \item \label{it:dQ1}
  For $i \ge 1, \quad \Span(X_i, \dots, X_n)=\Span(e_i, \dots, e_n)$, in particular $e_n$ is a nonzero multiple of $X_n$ and spans the centre of $\n$ and $\Span(e_2, \dots, e_n)=\Span(X_2, \dots, X_n)$.

  \item \label{it:dQ2}
  Relative to the orthonormal basis $e_i$ for $\n$, the matrix $A$ of the restriction of $\ad_f$ to $\n$ is lower-triangular, with the same diagonal entries as $M$, that is, $\la_1=a, \; \la_i=d+(i-2)a$ for $2 \le i \le n-1$, and $\la_n=2d+(n-3)a$ (from the top left to the bottom right corner).

  \item \label{it:dQ3}
  For $2 \le i \le n-2, \quad [e_1,e_i]=c_ie_{i+1}+e'_{i+1}$, where $c_i \ne 0$ and $e'_{i+1} \in \Span(e_{i+2}, \dots, e_n)$. For $2 \le i,j \le n-1, \quad [e_i,e_j]=\<Ke_i, e_j\>e_n$, where $K$ is a skew-symmetric matrix, with $[e_i,e_j]=0$ for $2 \le i,j \le n-1, \; i+j > n+1$.
\end{enumerate}
\end{lemma}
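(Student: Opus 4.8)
The plan is to read the entire lemma as a statement about the interaction of the Gram--Schmidt procedure with the flag
\[
W_i = \Span(X_i, \dots, X_n), \qquad 1 \le i \le n,
\]
whose members for $i \ge 3$ are precisely the ideals $\ig_i = Q_n^{(i-2)}$ of the lower central series and whose top member $W_n = \br X_n$ is the centre $\z$ of $\n$. For assertion~\ref{it:dQ1} I would argue directly from Gram--Schmidt: since the $X_i$ are orthonormalised in the order $X_n, X_{n-1}, \dots, X_1$, each $e_k$ lies in $\Span(X_n, \dots, X_k) = W_k$ with a nonzero $X_k$-component, whence $\Span(e_i, \dots, e_n) = W_i$ for every $i$; in particular $e_n$ is a nonzero multiple of $X_n$ and spans $\z$. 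Writing $e_k = \gamma_{kk} X_k + (\text{terms in } X_j,\, j>k)$ with $\gamma_{kk} \ne 0$, the change of basis is triangular, and inverting it gives $X_k = \gamma_{kk}^{-1} e_k + (\text{terms in } e_j,\, j>k)$, a fact I will use repeatedly.

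For assertion~\ref{it:dQ2}, Lemma~\ref{l:derQn} tells us that the matrix $M$ of $\phi = (\ad_f)_{|\n}$ in the basis $\{X_i\}$ is lower-triangular, so $\phi(W_i) \subseteq W_i$ for all $i$. Since $W_i = \Span(e_i, \dots, e_n)$ by assertion~\ref{it:dQ1}, the matrix $A$ of $\phi$ in the basis $\{e_i\}$ is lower-triangular as well. To identify the diagonal I would pass to the one-dimensional quotient $W_i/W_{i+1}$, on which $\phi$ acts as multiplication by $M_{ii} = \la_i$; as $e_i \equiv \gamma_{ii} X_i \pmod{W_{i+1}}$, this forces $A_{ii} = \la_i$, giving exactly the diagonal entries listed.

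Assertion~\ref{it:dQ3} is then a matter of expanding brackets in the $\{X_i\}$ basis and keeping track of the lowest surviving member of the flag. For $2 \le i \le n-2$ the bracket $[e_1, e_i]$ has leading term $\gamma_{11}\gamma_{ii}[X_1, X_i] = \gamma_{11}\gamma_{ii} X_{i+1}$, while every other contribution is either $[X_1, X_q]$ with $q > i$ (equal to $X_{q+1}$ or $0$) or $[X_p, X_q]$ with $p, q \ge 2$ (equal to $\pm X_n$ or $0$), hence lies in $W_{i+2}$; passing back to the $e$-basis yields $[e_1, e_i] = c_i e_{i+1} + e'_{i+1}$ with $c_i = \gamma_{11}\gamma_{ii}\gamma_{i+1,i+1}^{-1} \ne 0$ and $e'_{i+1} \in \Span(e_{i+2}, \dots, e_n)$. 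For the remaining statements, every $e_i, e_j$ with $2 \le i, j \le n-1$ lies in $W_2 = \Span(X_2, \dots, X_n)$, on which the bracket takes values in $\br X_n = \br e_n$; writing $[e_i, e_j] = \<K e_i, e_j\> e_n$ then defines a skew-symmetric $K$ because $\{e_i\}$ is orthonormal and the bracket is skew. Finally, if $i + j > n+1$ then $p + q > n+1$ for all $p \ge i$, $q \ge j$, so $[X_p, X_q] = 0$ (the brackets of two elements $X_p, X_q$ with $p,q \ge 2$ being nonzero only for $p+q = n+1$); since $e_i \in W_i$ and $e_j \in W_j$, this gives $[e_i, e_j] = 0$.

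The individual computations are routine; the one point I would flag as the genuine obstacle is to make sure that the coefficient $c_i$ in assertion~\ref{it:dQ3} really is nonzero --- that is, that the monomial $X_{i+1}$ truly survives as the leading term and is neither cancelled nor absorbed into $W_{i+2}$. This is exactly what is secured by the nonvanishing of the Gram--Schmidt diagonal coefficients $\gamma_{kk}$ together with the observation that $[X_1, X_i] = X_{i+1}$ lies strictly below every competing bracket in the flag.
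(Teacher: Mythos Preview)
Your proof is correct and for assertions~\ref{it:dQ1}, \ref{it:dQ2}, and the first two claims of~\ref{it:dQ3} it is exactly the paper's argument, only written out in more detail (the paper simply says these follow from the construction, from the lower-triangularity of $M$, and from the relations~\eqref{eq:relQn}). The one place where you diverge is in the final claim that $[e_i,e_j]=0$ for $2\le i,j\le n-1$ with $i+j>n+1$: you prove this directly by expanding $e_i\in W_i$, $e_j\in W_j$ in the $X$-basis and observing that every $[X_p,X_q]$ with $p,q\ge2$ and $p+q>n+1$ vanishes in $Q_n$; the paper instead works intrinsically in the $e$-basis and invokes the Jacobi identity (as in the proof of Lemma~\ref{l:real}\eqref{it:real1}) to obtain the relations $K_{i,j+1}+K_{i+1,j}=0$ and $K_{i+1,n-1}=0$, from which the vanishing follows by downward induction on $i+j$. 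Your route is more elementary and immediately transparent, since it avoids having to track the higher-order terms $e'_{i+1}$ in the Jacobi computation; the paper's route is basis-independent and reuses a computation already done, which is why it can be dispatched in one line.
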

\begin{proof}
Assertion~\ref{it:dQ1} follows directly from our construction. Assertion~\ref{it:dQ2} follows from assertion~\ref{it:dQ1} and from the fact that $M$ is lower-triangular. The first two statements of assertion~\ref{it:dQ1} also follow from assertion~\ref{it:dQ1} and relations \eqref{eq:relQn}. The fact that $[e_i,e_j]=0$ for $2 \le i,j \le n-1, \; i+j > n+1$, can be proved similar to that in the proof of Lemma~\ref{l:real}\eqref{it:real1}: from the Jacobi identities it follows that $K_{i,j+1}+K_{i+1,j}=0$ for $2 \le i, j \le n-2$ and $K_{i+1,n-1}=0$ for $2 \le i \le n-2$, which implies the claim.
\end{proof}
}

By Lemma~\ref{l:derQnorth}\eqref{it:dQ1} the subspaces $\ig_k$ defined before Theorem~\ref{th:fili} coincide with the subspaces $\Span(e_k, \dots, e_n)$. Denote $\pi_k \in \End(\n)$ the orthogonal projection to $\ig_k$. The proof of the necessity is based on estimating $\Tr \pi_k R_1$.
From Lemma~\ref{l:derQnorth}\eqref{it:dQ2} we have
\begin{equation}\label{eq:trpiVAAt}
    \Tr (\pi_k [A,A^t])=\sum_{j=k}^n (\|A^te_j\|^2 - \|Ae_j\|^2)= \sum_{j=k}^n \sum_{i=1}^m A_{ji}^2 \ge 0,
\end{equation}
as $A$ is lower-triangular. Furthermore, again by Lemma~\ref{l:derQnorth}\eqref{it:dQ2},
\begin{equation}\label{eq:trpiVAs}
    \Tr (\pi_k A^s) = \Tr (\pi_k A) = \sum_{j=k}^n \la_j= \iota_k.
\end{equation}
From \eqref{eq:riccinilexplicit} we obtain
\begin{equation}\label{eq:ricnenen}
\begin{split}
\<\ric^{\n} e_n,e_n\> &= \frac{1}{4} \sum_{i,j} \<[e_i,e_j],e_n\>^2\\
&= \frac{1}{2} \sum_{j=2}^{n-1} \<[e_1,e_j],e_n\>^2 + \frac{1}{4} \sum_{i,j=2}^{k-1} \<[e_i,e_j],e_n\>^2 + \frac{1}{2} \sum_{i=k}^{n-1} \sum_{j=2}^{n-1} \<[e_i,e_j],e_n\>^2,
\end{split}
\end{equation}
where we used the fact that $k  \ge m+1$ and that $[e_i,e_j]=0$ for $2 \le i,j \le n-1$ with $i+j > n+1$ from Lemma~\ref{l:derQnorth}\eqref{it:dQ3}. Again from \eqref{eq:riccinilexplicit}, with $l=k, \dots, n-1$, we have
\begin{equation*}
\begin{split}
\<\ric^{\n} e_l,e_l\> &= -\frac{1}{2} \sum_i \|[e_i,e_l]\|^2 + \frac{1}{4} \sum_{i,j} \<[e_i,e_j],e_l\>^2 \\
&= -\frac{1}{2} \Big(\|[e_1,e_l]\|^2 + \sum_{i=2}^{n-1} \|[e_i,e_l]\|^2 \Big)+ \frac{1}{2} \sum_{j=2}^{n-1} \<[e_1,e_j],e_l\>^2\\
&= -\frac{1}{2} \sum_{l<j \le n} \<[e_1,e_l],e_j\>^2 -\frac{1}{2} \sum_{i=2}^{n-1} \<[e_i,e_l],e_n\>^2 + \frac{1}{2} \sum_{2 \le j <l} \<[e_1,e_j],e_l\>^2,
\end{split}
\end{equation*}
where in the last line we used Lemma~\ref{l:derQnorth}\eqref{it:dQ3}. From this and \eqref{eq:ricnenen} we now obtain
\begin{equation*}
\begin{split}
\Tr \pi_k \ric^{\n} &= \<\ric^{\n} e_n,e_n\>+ \sum_{l=k}^{n-1}\<\ric^{\n} e_l,e_l\>= \frac{1}{4} \sum_{i,j=2}^{k-1} \<[e_i,e_j],e_n\>^2 \\
&+ \frac{1}{2} \sum_{j=2}^{n-1} \<[e_1,e_j],e_n\>^2-\frac{1}{2} \sum_{l=k}^{n-1}\sum_{j=l+1}^{n} \<[e_1,e_l],e_j\>^2 + \frac{1}{2} \sum_{l=k}^{n-1} \sum_{j=2}^{l-1} \<[e_1,e_j],e_l\>^2 \\
&= \frac{1}{4} \sum_{i,j=2}^{k-1} \<[e_i,e_j],e_n\>^2 -\frac{1}{2} \sum_{j=k}^{n-1}\sum_{l=j+1}^n \<[e_1,e_j],e_l\>^2 + \frac{1}{2} \sum_{l=k}^{n} \sum_{j=2}^{l-1} \<[e_1,e_j],e_l\>^2\\
&= \frac{1}{4} \sum_{i,j=2}^{k-1} \<[e_i,e_j],e_n\>^2 + \frac{1}{2} \sum_{l=k}^{n} \sum_{j= 2}^{k-1} \<[e_1,e_j],e_k\>^2 \ge 0,
\end{split}
\end{equation*}
Using this together with (\ref{eq:trpiVAAt}, \ref{eq:trpiVAs}) we find from \eqref{eq:r1}
\begin{equation*}
    \Tr \pi_k R_1 = \Tr \pi_k(\ric^\n + \frac{1}{2} [A,A^t] - T A_1^s) \ge -T \iota_k.
\end{equation*}
As $\Ric < 0$, the left-hand side must be negative, which proves the necessity of~\eqref{eq:iotak}.

\medskip

\textbf{Sufficiency.} Suppose $\g=\br Y \oplus \n$ is a one-dimensional extension of $\n=Q_n$ by a derivation $M$ (which must be of the form \eqref{eq:derQn} relative to the basis $X_i$ by Lemma~\ref{l:derQn}) such that inequalities~\eqref{eq:iotak} are satisfied. We want to construct an inner product of negative Ricci curvature on $\g$.

First, if $a=d$, then all the eigenvalues of $M$ are positive (up to changing the sign of $M$) and the existence of a required inner product follows from Theorem~\ref{t:neg}\eqref{it:neg2}. Assuming $a \ne d$, as in the proof of the necessity above, we can modify the basis $X_i$ by changing $X_2$ to $X_2-b(a-d)^{-1}X_1$ to eliminate $b$ from $M$, still keeping the relations \eqref{eq:relQn} unchanged. We keep the notation $X_i$ for this new basis and the notation $M$ for the matrix of our derivation, which is now lower-triangular, with the same diagonal entries, namely $\la_1=a$, $\la_i=d+(i-2)a$ for $2 \le i \le n-1$, and $\la_n=2d+(n-3)a$ (from the top left to the bottom right corner). Next, let $N$ be a positive derivation of $\n$ which is diagonal relative to the basis $X_i$, say $N = \diag(1,2, \dots,n-1,n+1)$ (derivation $\phi_1$ in \eqref{eq:Qnder}). For $s > 0$ define the operator $T_s \in \End(\g)$ by $T_sY=Y$ and $T_sX=e^{sN}X$, for $X \in \n$. When $s \to \infty$, the Lie algebra $\g$ degenerates to the Lie algebra $\bar g$ with the same nilradical $\n=Q_n$ and with the restriction of $\ad_Y$ to $\n$ being diagonal, namely $(\ad_Y)_{|\n} = \diag(\la_1,\la_2, \dots, \la_n)$. By \cite[Proposition~1]{NN} it suffices to construct an inner product of negative Ricci curvature on $\bar \g$. In the notation of Section~\ref{ss:ricci} take $f=Y$ and $e_i=e^{-x_i}X_i, \; x_i \in \br$, for $i=1, \dots, n$, to be an orthonormal basis for the inner product which we are constructing.

We have $[f,e_i]=\la_i e_i$ for $i=1, \dots, n$, $[e_1,e_i]=e^{-x_1-x_i+x_{i+1}}e_{i+1}$ for $i=2, \dots, n-2$, and $[e_i,e_{n+1-i}]=(-1)^{i+1} e^{-x_i-x_{n+1-i}+x_n}e_{n}$ for $i=2, \dots, n-1$ (with all the other brackets being zero unless they follow from the ones above by the skew-symmetry). In the notation of Section~\ref{ss:ricci} we now have
\begin{equation*}
    A=A^s=\diag(\la_1,\la_2, \dots, \la_n), \quad R_2=0, \quad r_3=-\sum\nolimits_i \la_i^2 < 0,
\end{equation*}
so $\Ric < 0$ is equivalent to $R_1<0$, where $R_1=\Ric^\n - T A$ and $\Ric^\n$ by \eqref{eq:riccinilexplicit} is diagonal with the diagonal entries
\begin{gather*}
    (\Ric^\n)_{11}=-\frac12 \sum_{i=2}^{n-2}e^{2(-x_1-x_i+x_{i+1})}, \quad (\Ric^\n)_{22}=-\frac12 e^{2(-x_1-x_2+x_3)} -\frac12 e^{2(-x_2-x_{n-1}+x_n)}, \\
    (\Ric^\n)_{kk}=\frac12 e^{2(-x_1-x_{k-1}+x_k)} -\frac12 e^{2(-x_1-x_k+x_{k+1})}-\frac12 e^{2(-x_k-x_{n-k+1}+x_n)}, \quad 3 \le k \le n-2, \\
    (\Ric^\n)_{n-1,n-1} \! =\frac12 e^{2(-x_1-x_{n-2}+x_{n-1})} - \frac12 e^{2(-x_2-x_{n-1}+x_n)}, \, (\Ric^\n)_{nn} \! =\frac14 \sum_{i=2}^{n-1} e^{2(-x_{n+1-i}-x_i+x_n)}.
\end{gather*}
It follows that $R_1$ is diagonal with the diagonal entries constructed as follows. In a Euclidean space $\br^n$ with the inner product $(\cdot,\cdot)$ and with an orthonormal basis $E_i$, introduce the vectors $F_1=-E_1-E_2+E_3, \, F_2=-E_1-E_3+E_4, \dots, F_{n-3}=-E_1-E_{n-2}+E_{n-1}, \, F_{n-2}=-E_2-E_{n-1}+E_n, \, F_{n-1}=-E_3-E_{n-2}+E_n, \dots, F_{n-4+m}= -E_m-E_{m+1}+E_n$ (so that we have one vector $F_\a=-E_i-E_j+E_k$ per every triple of subscripts $(i,j,k), \; i<j$, such that $[e_i,e_j]$ is a nonzero multiple of $e_k$)
and the vectors $V_1=E_1+\sum_{i=3}^{n-1} (i-2)E_i+ (n-3)E_n, \; V_2=\sum_{i=2}^{n-1} E_i + 2E_n$. Then $R_1$ is the diagonal matrix whose diagonal entries are the corresponding components of the vector
\begin{gather} \label{eq:Px}
    P(x)=\frac12 \sum_{\a=1}^{n-4+m} e^{2(x,F_\a)}F_\a - T( a V_1 + d V_2) = \frac14 \grad (\phi(x)) - T( a V_1 + d V_2), \\
    \text{where } \phi(x)=\sum_{\a=1}^{n-4+m} e^{2(x,F_\a)}, \quad x \in \br^n, \label{eq:phix}
\end{gather}
and where $\grad \phi(x)$ is the vector dual to the one-form $d\phi$. Note that $T=(aV_1+ d V_2, \mathbf{1})$, where $\mathbf{1} \in \br^n$ is the vector all of whose components equal $1$.

We need the following technical Lemma. For a set $V$ of vectors $v_\a, \; 1 \le \a \le q$, in a Euclidean space $\br^N$ with the inner product $(\cdot, \cdot)$, introduce the function $f: \br^N \to \br$ by $f(x)=\sum_{\a=1}^q e^{(v_\a,x)}$. Denote $\Conv(V)$ the relative interior of the cone over the convex hull of the vectors $v_\a$, that is, the set of all linear combinations $\sum_\a \mu_\a v_\a$ with all the $\mu_\a$ being positive.

\begin{lemma}\label{l:conv}
In the above notation, $\{\grad f(x) \, : \, x \in \br^N\} = \Conv(V)$.
\end{lemma}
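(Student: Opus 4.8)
The plan is to prove the two inclusions separately: the inclusion $\{\grad f(x)\} \subseteq \Conv(V)$ is immediate, while the reverse inclusion reduces to a coercivity argument for a convex function. Writing $\grad f(x) = \sum_{\alpha=1}^q e^{(v_\alpha, x)} v_\alpha$, every coefficient $e^{(v_\alpha, x)}$ is strictly positive, so $\grad f(x)$ is a strictly positive combination of the $v_\alpha$; hence $\grad f(x) \in \Conv(V)$ for all $x$, and the forward inclusion holds.

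Before addressing the reverse inclusion I would reduce to the case that the $v_\alpha$ span $\br^N$. If $W = \Span(v_1,\dots,v_q) \subsetneq \br^N$, then $(v_\alpha, x)$ depends only on the orthogonal projection of $x$ onto $W$, so replacing $x$ by this projection changes neither $f$ nor $\grad f$; moreover $\grad f(x) \in W$ always and $\Conv(V) \subset W$. Thus it suffices to work inside $W$, and we may assume $\Span(v_1,\dots,v_q) = \br^N$.

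Now fix $w \in \Conv(V)$ and write $w = \sum_\alpha \mu_\alpha v_\alpha$ with all $\mu_\alpha > 0$. The equation $\grad f(x) = w$ is exactly the critical-point equation $\grad F(x) = 0$ for the convex function $F(x) = f(x) - (w, x) = \sum_\alpha \big( e^{(v_\alpha, x)} - \mu_\alpha (v_\alpha, x)\big)$, so it suffices to show that $F$ attains a minimum on $\br^N$, and since $F$ is continuous this follows once I establish coercivity. I would first analyse $F$ along each ray $x = ru$, $|u| = 1$, as $r \to +\infty$: if $(v_\alpha, u) > 0$ for some $\alpha$ the corresponding exponential forces $F(ru) \to +\infty$; whereas if $(v_\alpha, u) \le 0$ for all $\alpha$, then $\sum_\alpha e^{(v_\alpha,ru)}$ stays bounded while the linear term $-r\,(w,u)$ tends to $+\infty$, because $(w,u) = \sum_\alpha \mu_\alpha (v_\alpha, u) < 0$. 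Here both hypotheses enter: the sum is $\le 0$ since the $\mu_\alpha$ are positive, and it can vanish only if $(v_\alpha, u) = 0$ for all $\alpha$, which by the spanning assumption would force $u = 0$.

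The main obstacle is upgrading this ``coercivity along every ray'' to genuine coercivity of $F$, and this is where I would invoke convexity. Since $F$ is convex, the quantity computed above is precisely its recession function $F_\infty(u) = \lim_{r\to\infty} F(ru)/r$, which the ray analysis shows to be strictly positive for every $u \ne 0$ (equal to $-(w,u) > 0$ when all $(v_\alpha,u) \le 0$, and $+\infty$ otherwise). A lower semicontinuous convex function whose recession function is positive in every nonzero direction has bounded sublevel sets, hence is coercive and attains its minimum at some $x_0$, where $\grad f(x_0) = w$. This gives $\Conv(V) \subseteq \{\grad f(x)\}$ and completes the proof.
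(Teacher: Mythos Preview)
Your argument is correct and follows the same overall scaffold as the paper (forward inclusion immediate; reduce to $\Span(V)=\br^N$; show that $F(x)=f(x)-(w,x)$ attains a minimum), but the final step is handled quite differently. The paper stays entirely elementary: it fixes $M=q+1$, introduces the positive definite form $Q(x)=\sum_\alpha (v_\alpha,x)^2$, and by a sequence of explicit inequalities shows that if $Q(x)>R^2$ for $R$ large enough then either $(w,x)\le -M$ or some exponential term dominates, in either case forcing $g(x)\ge M$. Your approach instead uses convexity structurally: you recognise that $F_\infty(u)=\lim_{r\to\infty}F(ru)/r$ is strictly positive for $u\ne 0$, and invoke the standard convex-analysis fact that a proper lsc convex function with recession cone $\{0\}$ has bounded sublevel sets. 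This buys you a much shorter and more transparent argument, at the cost of citing a result (Rockafellar, essentially Corollary~8.7.1) rather than proving coercivity from scratch; the paper's version is longer and more computational but fully self-contained. Your explicit acknowledgement that ``coercivity along every ray'' does not by itself yield genuine coercivity, and that convexity is exactly what bridges that gap, is the right diagnosis.
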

\begin{proof}
Without loss of generality we can assume that $\Span(V) =\br^N$ (otherwise we simply replace $\br^N$ with $\Span(V)$).

The inclusion $\{\grad f(x) \, : \, x \in \br^N\} \subset \Conv(V)$ is obvious. To establish the reverse inclusion choose a vector $u = \sum_\a \mu_\a v_\a \in \Conv(V), \; \mu_\a > 0$, and consider a function $g: \br^N \to \br$ defined by $g(x)=f(x)-(u,x)$. Then $\grad g = \grad f - u$, so to prove that $u \in \{\grad f(x) \, : \, x \in \br^N\}$ it suffices to show that $g$ has a critical point. We will show that $g$ in fact attains its minimum on $\br^N$. We have $g(0)=f(0)=q$, so it suffices to show that outside a compact in $\br^N$ the function $g$ is greater than or equal to $M:=q+1 > 0$. To construct such a compact introduce the quadratic form $Q(x)=\sum_\a (v_\a,x)^2$. As $V$ spans the whole space $\br^N$, $Q$ is positive definite. Our compact $C$ will be the set $\{ x \in \br^N \, : \, Q(x) \le R^2\}$, where $R$ is a large positive number which we will specify a little later.

Choose an arbitrary $x \notin C$. We want to show that $g(x) \ge M$, if $R$ is chosen to be large enough. First, if $(u,x) \le -M$, then $g(x) \ge M$. Suppose that $(u,x) \ge -M$, so that $\sum_\a \mu_\a (v_\a,x) \ge -M$. Let $S_+$ (respectively $S_-$) be the set of those $\a$ for which $(v_\a,x) \ge 0$ (resp. $(v_\a,x) < 0$) and let $\Sigma_\pm = \sum_{\a \in S_\pm} \mu_\a (v_\a,x)$. We have $\Sigma_+ \ge 0 \ge \Sigma_-$ and $\Sigma_+ +\Sigma_- \ge -M$. It follows that $0 \le -\Sigma_- \le \Sigma_+ + M$, so $0 \le \sum_{\a \in S_-} \mu_\a (-(v_\a,x)) \le \Sigma_+ + M$, from which $0 \le -(v_\a,x) \le \mu_\a^{-1}(\Sigma_+ + M)$ for all $\a \in S_-$, hence $\sum_{\a \in S_-} (v_\a,x)^2 \le (\Sigma_+ + M)^2 \sum_{\a \in S_-} \mu_a^{-2}$. Furthermore, from $\sum_{\a \in S_+} \mu_\a (v_\a,x) = \Sigma_+$ we get $\sum_{\a \in S_+} (v_\a,x)^2 \le \Sigma_+^2 \sum_{\a \in S_+} \mu_a^{-2}$. It follows that $R^2=Q(x) \le (\Sigma_+ + M)^2 \sum_{\a=1}^q \mu_a^{-2}$, so $\Sigma_+ \ge R (\sum_{\a=1}^q \mu_a^{-2})^{-1/2}-M$ (which is positive for $R$ large enough). But then, as $\Sigma_+ = \sum_{\a \in S_+} \mu_\a (v_\a,x)$, there exists a $\b \in S_+$ such that $(v_\b,x) \ge \Sigma_+ (\sum_{\a \in S_+} \mu_\a)^{-1} \ge (R (\sum_{\a=1}^q \mu_a^{-2})^{-1/2}-M)(\sum_{\a=1}^q \mu_\a)^{-1}$. On the other hand, by Cauchy-Schwartz, $(u,x) \le |\sum_\a \mu_\a (v_\a,x)| \le (\sum_{\a=1}^q \mu_a^2)^{1/2} R$, so we obtain $g(x) \ge \mu_\b e^{(v_\b,x)} - (u,x) \ge \mu_\b \exp((R (\sum_{\a=1}^q \mu_a^{-2})^{-1/2}-M)(\sum_{\a=1}^q \mu_\a)^{-1}) - (\sum_{\a=1}^q \mu_a^2)^{1/2} R$, which can be made greater than $M$ for all $\b =1, \dots, q$ if we choose $R$ to be large enough.

Then $g$ attains its minimum somewhere in $C$, so $u \in \Conv(V)$, as required.
\end{proof}

Returning to the proof, we need to find $x \in \br^n$ such that all the components of the vector $P(x)$ given by~\eqref{eq:Px} are negative, that is, such that $-P(x)$ belongs to the first octant of $\br^n$. This is equivalent to the existence of $x \in \br^n$ such that $T( a V_1 + d V_2) \in \frac14 \grad (\phi(x)) + \Conv(E_1, \dots, E_n)$, which by Lemma~\ref{l:conv} is equivalent to the fact that $a V_1 + d V_2 \in \Conv(E_1, \dots, E_n, F_1, \dots, F_{n-4+m})$ (as $T > 0$ by Remark~\ref{rem:sign}).

This reduces our proof to the following question in convex geometry (or rather even in linear programming): we need to show that if $a$ and $d$ satisfy \eqref{eq:iotak}, then $a V_1 + d V_2 \in \Conv(E_1, \dots, E_n, F_1, \dots, F_{n-4+m})$. Note that $E_n=F_{n-4+m}+E_m+E_{m+1}$, so crossing out $E_n$ does not change the convex cone on the right-hand side, and then successively, $E_{n-1}=F_{n-3}+E_1+E_{n-2}, \; E_{n-2}=F_{n-4}+E_1+E_{n-3}, \dots, E_3=F_1+E_1+E_2$, so we effectively need to show that $a V_1 + d V_2 \in \Conv(E_1, E_2, F_1, \dots, F_{n-4+m})$, that is, that $a V_1 + d V_2 =\sum_{i=1}^{n-2} w_i F_i + \sum_{j=1}^{m-2} y_j F_{n-2+j} + z_1E_1 + z_2E_2$, for some $w_i, y_j, z_1, z_2 > 0$.

We first of all note that $V_1, V_2 \perp F_\a$, for all $\a=1, \dots, n+m-4$, from which $z_1=a\|V_1\|^2+d (V_1,V_2), \; z_2=a(V_1,V_2)+d\|V_2\|^2$. Computing these we find $z_2=(m+1)\iota_n > 0$, $z_1=\frac {{n}^{3}-6\,{n}^{2}+11\,n+6}{6(n-3)}\iota_{n-1}+\frac {{n}^{2}-7\,n+6}{2(n-3)}\iota_n > 0$. Furtermore, as no nonzero linear combination of the vectors $F_\a$ lies in $\Span(E_1,E_2)$, it suffices to find $w_i, y_j > 0$ such that $a V_1' + d V_2' =\sum_{i=1}^{n-2} w_i F_i' + \sum_{j=1}^{m-2} y_j F_{n-2+j}'$, where dash means the orthogonal projection to $\Span(E_3, \dots, E_n)$. Acting on the both sides by the $(n-2) \times (n-2)$-matrix $T$ with the entries $T_{rs}=1$ if $s \ge r$, and $T_{rs}=0$ if $s<r$ we get the following system of linear equations
\begin{equation}\label{eq:hull}
\begin{gathered}
    \iota_3=w_1-\sum_{j=1}^{m-2}y_j, \quad \iota_4=w_2-\sum_{j=2}^{m-2}y_j, \quad \dots \quad , \iota_m=w_{m-2}-y_{m-2},\\
    \iota_{m+1}=w_{m-1},\\
    \iota_{m+2}=w_m+ y_{m-2}, \dots, \iota_{n-2}=w_{n-4}+\sum_{j=2}^{m-2}y_j, \quad \iota_{n-1}=w_{n-3}+\sum_{j=1}^{m-2}y_j\\
    \iota_{n}=w_{n-2}+\sum_{j=1}^{m-2}y_j.
\end{gathered}
\end{equation}
Assuming \emph{all} the numbers $\iota_3, \iota_4, \dots, \iota_n$ to be positive we can easily find positive $y_i, w_j$ satisfying \eqref{eq:hull} by first choosing $y_i > 0$ to be small enough and then finding $w_j$. It therefore remains to show that the inequalities $\iota_{m+1},\iota_{m+2}, \dots, \iota_n > 0$ imply that also $\iota_3, \iota_4, \dots, \iota_m > 0$.

From \eqref{eq:iotak}, every inequality $\iota_k > 0$ is equivalent to the inequality $\kappa_k a + d > 0$, where $\kappa_k=\frac{(n-3)n-(k-3)(k-2)}{2(n-k+2)}$, hence the system of inequalities $\iota_3, \iota_4, \dots, \iota_n > 0$ is equivalent to just two of them: $\kappa_{\min} a + d > 0, \; \kappa_{\max} a + d > 0$, where $\kappa_{\max} = \max \kappa_k, \; \kappa_{\min} = \min \kappa_k$. Now the function $f(t)=\frac{(n-3)n-(t-3)(t-2)}{2(n-t+2)}$, for $t \in [3,n]$, increases on $[3, n+2-\sqrt{2n}]$ and decreases on $[n+2-\sqrt{2n},n]$. It follows that $\kappa_{\max}= \kappa_l = \max(\kappa_{[n+2-\sqrt{2n}]}, \kappa_{[n+3-\sqrt{2n}]})$ and $\kappa_{\min}=\min(\kappa_3, \kappa_n)=\kappa_n=\frac12(n-3)$. Thus the system of inequalities $\iota_3, \iota_4, \dots, \iota_n > 0$ is equivalent to two inequalities: $\iota_l > 0, \; \iota_n > 0$ (the later one is simply $(n-3)a + 2d > 0$). As $l \ge m+1$, the claim follows.
\end{proof}

As we can see from the proof, the system of inequalities $\iota_{m+1}, \iota_{m+2}, \dots, \iota_n > 0$ is equivalent to just two of them (which should not be surprising, as the torus of derivations has dimension two). We can now sharpen the statement of Case~\eqref{it:t2} of Theorem~\ref{th:fili} as follows.

\begin{theorem}\label{t:Qn}
Let $\g$ be a solvable non-nilpotent Lie algebra which is a one-dimensional extension of its nilradical $\n=Q_n, \; n=2m, \; m>2$. Then $\g$ admits an inner product of negative Ricci curvature if and only if $\g$ is the extension of $\n$ by a vector $Y$ such that $\iota_n(Y) > 0, \; \iota_l(Y) > 0$, where the number $l$ is defined as follows: $l \in\{p,p+1\}$ and $f(l)=\max(f(p),f(p+1))$, where $p=[n+2-\sqrt{2n}]$ and $f(t)=\frac{(n-3)n-(t-3)(t-2)}{2(n-t+2)}$.

\end{theorem}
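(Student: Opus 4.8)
The plan is to obtain Theorem~\ref{t:Qn} as a direct refinement of Proposition~\ref{p:Qn}, reading off the two genuinely binding inequalities among the system appearing there. By Proposition~\ref{p:Qn}, after fixing the sign of the generating derivation so that $T>0$ (which is automatic once the inequalities hold, by Remark~\ref{rem:sign}), the algebra $\g$ carries an inner product with $\Ric<0$ if and only if $\iota_k>0$ for all $k=m+1,\dots,n$. Since the maximal torus of derivations of $Q_n$ is two-dimensional, formula~\eqref{eq:iotak} exhibits each $\iota_k$ as a linear functional of the pair of parameters $(a,d)$, and because the coefficient $n-k+2$ of $d$ is strictly positive we may divide by it and rewrite each condition as $\kappa_k a+d>0$ with $\kappa_k=f(k)$.

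The first step is the elementary planar fact that a finite family of homogeneous half-plane conditions $\{\kappa_k a+d>0\}$ in the $(a,d)$-plane is equivalent to just two of them, those with the extreme coefficients $\kappa_{\min}=\min_k\kappa_k$ and $\kappa_{\max}=\max_k\kappa_k$: for $a\ge 0$ the most restrictive constraint is the one with smallest $\kappa_k$, and for $a\le 0$ the one with largest $\kappa_k$, so satisfying the extreme pair forces all the rest. Thus the whole system $\iota_{m+1},\dots,\iota_n>0$ collapses to the two inequalities $\kappa_{\min}a+d>0$ and $\kappa_{\max}a+d>0$, and it remains only to locate the extreme indices.

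This is exactly the function analysis already used at the end of the proof of Proposition~\ref{p:Qn}. Substituting $u=n-t+2$ I would rewrite $f(t)=-n/u+\tfrac12(2n-1)-\tfrac12 u$, so that $f$ has a unique critical point, at $u=\sqrt{2n}$, i.e. at $t^\ast=n+2-\sqrt{2n}$; hence $f$ increases on $[3,t^\ast]$ and decreases on $[t^\ast,n]$. As $t^\ast$ lies strictly inside the range, $\kappa_{\max}$ is attained at the nearest admissible integer, namely $\kappa_{\max}=\kappa_l$ with $l\in\{p,p+1\}$, $p=[n+2-\sqrt{2n}]$ and $f(l)=\max(f(p),f(p+1))$, yielding the inequality $\iota_l>0$. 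Because $f$ is unimodal and one checks $f(m+1)\ge f(n)$ (which reduces to $(m-1)^2\ge 0$), its minimum over the range is the right endpoint, $\kappa_{\min}=\kappa_n=\tfrac12(n-3)$, yielding $\iota_n>0$.

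The only genuine verification --- and what I expect to be the mild obstacle --- is to confirm that $l$ indeed belongs to the admissible range $\{m+1,\dots,n\}$, so that the pair $\{\iota_l,\iota_n\}$ is really a subfamily of the system of Proposition~\ref{p:Qn}. Writing $n=2m$, one has $l\ge p\ge t^\ast-1=2m+1-2\sqrt{m}$, which is $\ge m+1$ precisely when $\sqrt m\ge 2$, i.e. for $m\ge 4$; the single remaining case $m=3$ (where $n=6$, $t^\ast\approx 4.54$, $p=4$ and $f(4)=f(5)=2$) gives $l\in\{4,5\}$ and hence again $l\ge m+1=4$, which I would dispatch by direct substitution. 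With $l\ge m+1$ established, $\iota_n>0$ and $\iota_l>0$ form a pair equivalent to the full system of Proposition~\ref{p:Qn}, which proves the theorem.
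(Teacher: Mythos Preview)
Your proposal is correct and follows essentially the same route as the paper, which derives Theorem~\ref{t:Qn} directly from the analysis in the last paragraph of the proof of Proposition~\ref{p:Qn}: rewriting each $\iota_k>0$ as $\kappa_k a+d>0$, reducing the family to the pair with extremal $\kappa_k$, and locating those via the unimodality of $f(t)$. Your argument is in fact slightly more explicit than the paper's, supplying the verification $f(m+1)-f(n)=\tfrac{(m-1)^2}{2(m+1)}\ge 0$ and the check that $l\ge m+1$ (including the borderline case $m=3$), both of which the paper asserts without detail.
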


We finish this section and the paper with the observation which may be useful in the future study of solvable algebras with negative Ricci curvature. Analyzing our proof and the approach in \cite{NN} one can see that the core of the argument is the study of the behaviour of $\Ric^\n$. It looks very promising to use for that the \emph{moment map} $\mathbf{m}$ of the action of a certain group (say $\mathrm{SL}(n)$) on the variety of nilpotent Lie brackets. This already proved to be very effective in the study of Einstein nilradicals. For example, it is proved in \cite{Lau} that for a Lie bracket $\mu$, $\mathbf{m}([\mu]) = 4 \|\mu\|^{-2} \Ric$ (see \cite{Lau} or \cite{Nik} for unexplained terminology). Moreover, our Lemma~\ref{l:conv} is in fact a version of \cite[Lemma~2]{Nik}, which in turn follows from deep results on the convexity of the image of the moment map (see e.g. \cite{HS}).


\end{document}